\newtheorem{theorem}{Theorem}
\newtheorem{prop}[theorem]{Proposition}
\newtheorem{cor}[theorem]{Corollary}
\newtheorem{lemma}[theorem]{Lemma}
\newtheorem{coloring}[theorem]{Coloring}
\newtheorem{problem}[theorem]{Problem}
\numberwithin{theorem}{section}
\newcommand{\f}{\overline{f}}
\newcommand{\ex}{\text{ex}}
\newcommand{\ep}{\varepsilon}
\newcommand{\floor}[1]{\left\lfloor #1 \right\rfloor}
\newcommand{\ceil}[1]{\left\lceil #1 \right\rceil}
\title{Generalized Ramsey numbers: forbidding paths with few colors}
\author{Robert A. Krueger\footnote{Department of Mathematics, Miami University, \texttt{kruegera@miamioh.edu}. Research was conducted at the REU at the City University of New York's Baruch College, which was supported by NSF grant DMS-1710305.}}
\date{June 16, 2019}
\begin{document}

\maketitle

\begin{abstract}

Let $f(K_n, H, q)$ be the minimum number of colors needed to edge-color $K_n$ so that every copy of $H$ is colored with at least $q$ colors. Originally posed by Erd\H{o}s and Shelah when $H$ is complete, the asymptotics of this extremal function have been extensively studied when $H$ is a complete graph or a complete balanced bipartite graph. Here we investigate this function for some other $H$, and in particular we determine the asymptotic behavior of $f(K_n, P_v, q)$ for almost all values of $v$ and $q$, where $P_v$ is a path on $v$ vertices.

\end{abstract}

\section{Introduction}





The classical graph Ramsey problem asks, ``for a given $k$ and $p$, what is the smallest $n$ such that every edge-coloring of $K_n$ with $k$ colors contains a monochromatic $K_p$?'' We may invert the roles of $k$ and $n$ in this question: for a given $n$ and $p$, what is the largest $k$ such that every edge-coloring of $K_n$ with $k$ colors contains a monochromatic $K_p$? This is equivalent to determining the smallest $k+1$ such that there exists an edge-coloring of $K_n$ with $k+1$ colors such that every copy of $K_p$ contains at least $2$ colors. Let $f(G, H, q)$ be the minimum number of colors needed to edge-color $G$ so that every copy of $H$ in $G$ contains at least $q$ colors.\footnote{In the literature these are called $(H, q)$-colorings, and $G$ is typically taken to be $K_n$ or $K_{n,n}$ to ensure no difficulty in finding copies of $H$; see for example \cite{axenovich}.} The classical Ramsey problem is then equivalent to determining $f(K_n, K_p, 2)$; as an example from \cite{gyarfas}, the Ramsey number $R(3,3,3) = 17$ is equivalent to $f(K_{16}, K_3, 2) = 3$ and $f(K_{17}, K_3, 2) = 4$. Additionally, $f(G, H, e(H))$ can be viewed as a variant of the rainbow Ramsey problem, where every copy of $H$ must receive all distinct colors (as defined in \cite{rainbow}). In this way studying $f(G, H, q)$ for general $q$ bridges the gap between these problems.

The function $f(K_n, K_p, q)$ was first introduced by Erd\H{o}s and Shelah (see Section 5 of \cite{earlyerdos}), and while Elekes, Erd\H{o}s, and F\"uredi had some preliminary results (described in Section 9 of \cite{erdos}), the problem was first systematically studied by Erd\H{o}s and Gy\'arf\'as \cite{gyarfas}. They focused on the asymptotics of $f(K_n, K_p, q)$ for large $n$, with $p$ and $q$ fixed. They also determined the \textbf{linear and quadratic thresholds} of this function, that is, they determined the smallest $q(p)$ such that $f(K_n, K_p, q(p)) = \Omega(n)$ (or $=\Omega(n^2)$, resp.). Axenovich, F\"uredi, and Mubayi \cite{axenovich} adapted these results to $f(K_{n,n}, K_{p,p}, q)$ in addition to relating these problems to other results in extremal combinatorics. Many others (only a few of which are mentioned here) have studied $f(K_n, K_p, q)$ and $f(K_{n,n}, K_{p,p}, q)$ for $q$ between the linear and quadratic thresholds \cite{sarklin,sarkbiplin}, above the quadratic threshold \cite{axenovich,sarkquad}, and at the `polynomial' threshold \cite{poly}. Besides two general results in \cite{axenovich}, little has been said for $f(K_n, H, q)$ when $H$ is not complete or complete bipartite.

The aim of this paper is to open up the study of $f(K_n, H, q)$ for general $H$. We consider $H$ and $q$ as fixed, determining the asymptotics of $f(K_n, H, q)$ in terms of $n$. In Section 2 we make some general observations for all $H$, supplementing those in \cite{axenovich}. One of the first Ramsey results for non-complete graphs is due to Gerencs\'er and Gy\'arf\'as \cite{gg}, who determined that every 2-coloring of $K_n$ has a monochromatic path of order $\ceil{(2n+1)/3}$. Inspired by this, in Section 3 we find the asymptotics of $f(K_n, P_v, q)$ for most $v$ and $q$, where $P_v$ denotes the path on $v$ vertices:

\begin{theorem}\label{maintheorem}
Let $v \geq 3$. The smallest $q(v)$ for which $f(K_n, P_v, q(v)) = \Theta(n^2)$ is $q(v) = \ceil{v/2}+1$. The largest $q(v)$ for which $f(K_n, P_v, q(v)) = \Theta(n)$ is $q(v) = \floor{v/2}$, unless $v=3$ or $v=5$, in which case $q(3) = 2$ and $q(5)=3$.
\end{theorem}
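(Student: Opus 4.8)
The plan is to prove the theorem in four pieces, corresponding to the two thresholds and the upper/lower bound for each. Throughout, the key structural fact is that a coloring of $K_n$ has ``every $P_v$ gets $\geq q$ colors'' if and only if every \emph{small} subset of vertices (say of size $v$ or a bit more) is colored in a way that every spanning path on those vertices uses $\geq q$ colors; since a path on $v$ vertices only sees $v-1$ edges, the local constraint is what matters, and one can hope to build global colorings by a product/blow-up construction from good local ones.

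\textbf{Quadratic threshold, lower bound $f(K_n, P_v, \ceil{v/2}+1) = \Omega(n^2)$.} Here I would argue that any coloring using $o(n^2)$ colors must contain a large color class, or more precisely, by a counting/pigeonhole argument (in the spirit of the Erd\H{o}s--Gy\'arf\'as local-lemma-free arguments), one finds a set of $v$ vertices spanning a subgraph in which some $\ceil{v/2}$ colors already ``cover'' a Hamiltonian path. The natural target: if a coloring has few colors, then many edges repeat, and a path on $v$ vertices with $v-1$ edges can afford to reuse colors; concretely, I expect that with $o(n^2)$ colors one can greedily find a $P_v$ whose edges take only $\ceil{v/2}$ distinct colors, by pairing up consecutive edges to share a color. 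Conversely, for the \textbf{quadratic upper bound} $f(K_n, P_v, \ceil{v/2}) = O(n)$ — wait, more precisely we need to show $\ceil{v/2}+1$ is the \emph{smallest} $q$ with quadratic growth, so we need $f(K_n, P_v, \ceil{v/2}) = o(n^2)$, in fact $O(n)$ or $O(n \log n)$ — I would exhibit an explicit coloring with near-linear colors in which every $P_v$ sees at least $\ceil{v/2}$ colors: e.g. color edge $\{i,j\}$ by something like $\lfloor (i+j)/c \rfloor$ or a Behrend-type / algebraic coloring, and check that along any path $x_1 x_2 \cdots x_v$ consecutive edges $\{x_t, x_{t+1}\}$ and $\{x_{t+1}, x_{t+2}\}$ can share a color but three-in-a-row cannot, forcing roughly $(v-1)/2 \geq \ceil{v/2}$ colors for $v$ even (the parity bookkeeping is exactly where the $+1$ and the floor/ceiling distinction enters).

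\textbf{Linear threshold.} For the \textbf{lower bound} $f(K_n, P_v, \floor{v/2}) = \Omega(n)$ (so that $\floor{v/2}$ is large enough to force linear growth): suppose a coloring uses $o(n)$ colors; I want to find a $P_v$ using at most $\floor{v/2}-1$ colors. With $o(n)$ colors, the ``majority'' color graph is dense, or one can find a large set in which only a few colors appear on a near-spanning subgraph; iterating, one extracts a path on $v$ vertices whose $v-1$ edges use very few colors (roughly, one color per two edges, but now we only need $\floor{v/2}-1$, which for odd $v$ is one less than $\ceil{v/2}$, and this is exactly the slack that the exceptional cases $v=3,5$ will exploit). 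For the \textbf{linear upper bound} $f(K_n, P_v, \floor{v/2}+1) = O(n)$ — i.e. $\floor{v/2}$ is the \emph{largest} $q$ with linear (not super-linear) growth, so $q = \floor{v/2}+1$ already jumps above linear unless... actually the cleanest reading: we must show $f(K_n, P_v, \floor{v/2}) = O(n)$, and separately that $f(K_n, P_v, \floor{v/2}+1)$ is $\omega(n)$ (super-linear) except for $v=3,5$. The $O(n)$ construction: take a proper-edge-coloring-like or ``rainbow spanning structure'' idea — partition $V(K_n)$ into about $n/v$ blocks, rainbow-color inside blocks extravagantly but cheaply reuse across blocks, arranging that any $P_v$ either lives mostly in one block (hence rainbow, $\geq v-1 \geq \floor{v/2}$) or crosses blocks and picks up a guaranteed number of ``crossing'' colors. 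The exceptional analysis for $v=3$ ($P_3$ is a path of two edges, $q=2$ means no two adjacent edges share a color, i.e. a proper edge coloring, needing $\Theta(n)$ colors, and $q=3$ is rainbow $K_3$, which is $\Theta(n^2)$ — consistent) and $v=5$ must be done by hand.

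The main obstacle I anticipate is the \textbf{linear lower bound}, $f(K_n, P_v, \floor{v/2}) = \Omega(n)$: ruling out $o(n)$-colorings requires showing that few colors always concentrate enough to embed a path economically, and the path (unlike a clique) is flexible, so one cannot just quote a density/supersaturation bound — one needs a careful extremal-graph argument, probably by induction on $v$ peeling off two vertices and one color at a time, being careful about the parity that creates the $v=3,5$ exceptions. A secondary obstacle is making the two upper-bound constructions genuinely achieve $O(n)$ rather than $O(n\log n)$ or $O(n^{1+o(1)})$, which may require an algebraic or recursive construction rather than a naive one; here I would lean on the techniques used for $f(K_n, K_p, q)$ near its linear threshold in \cite{sarklin} and the general observations promised in Section 2.
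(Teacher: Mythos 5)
There is a genuine gap — in fact several, and the plan also mislocates the difficulty. First, the quadratic lower bound $f(K_n,P_v,\ceil{v/2}+1)=\Omega(n^2)$ is the real work of the paper, and ``greedily find a $P_v$ whose edges take only $\ceil{v/2}$ colors by pairing up consecutive edges'' is not an argument: a coloring with $o(n^2)$ colors need not contain any single large color class that one can exploit directly, and the paper has to prove a trichotomy (Lemma \ref{pathlemma}) — either a $P_v$ with many repeats exists, or $\Omega(n^2)$ colors are used, or many color classes contain large matchings — via a dyadic counting argument plus an auxiliary bipartite graph on pairs $(u,c)$ in which a greedy path is built; the greedy step only works after deleting low-degree vertices and after verifying the resulting graph is nonempty, and the resulting $P_v$ reuses $\floor{v/2}$ \emph{distinct} colors, one per consecutive edge-pair. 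Second, your proposed quadratic upper bound is provably impossible for odd $v\geq 7$: you suggest an explicit near-linear coloring in which every $P_v$ sees at least $\ceil{v/2}$ colors, but the paper shows $f(K_n,P_v,\frac{v+1}{2})=\Omega(n^{3/2}/\log n)$ for odd $v\geq 9$ (and $\Omega(n^{4/3}/\log^{2/3}n)$ for $v=7$), so no such coloring with $O(n^{1+o(1)})$ colors exists. The correct $o(n^2)$ bound at $q=\ceil{v/2}$ for odd $v$ comes from the Lov\'asz Local Lemma (Theorem \ref{LLL}), giving $O(n^{2-2/(v-1)})$, not from a construction. Relatedly, to certify that $\floor{v/2}$ is the \emph{largest} linear $q$ for odd $v\geq 7$ you must prove a superlinear lower bound at $q=\frac{v+1}{2}$; you offer no mechanism for this, whereas the paper combines the matching case of Lemma \ref{pathlemma} with an Erd\H{o}s set-intersection lemma applied to the vertex sets of large monochromatic matchings to find two colors whose matchings overlap heavily.

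Conversely, the part you flag as the main obstacle — the linear lower bound — is the easy part: any color class with at least $vn$ edges contains a monochromatic $P_v$, so $f(K_n,P_v,2)\geq\binom{n}{2}/(vn)=\Omega(n)$, and monotonicity in $q$ finishes it; no induction peeling off vertices is needed. The linear upper bound at $q=\floor{v/2}$ is also a one-line construction (Coloring \ref{order}: color edge $ij$ by $\min(i,j)$, under which every matching is rainbow, so every $P_v$ contains a rainbow matching of size $\floor{v/2}$), not a block/blow-up scheme. Finally, the exceptional case $v=5$ cannot be ``done by hand'' in any routine sense — it needs Rosta's XOR coloring to get $f(K_n,P_5,3)=O(n)$ — and your $v=3$ discussion conflates $P_3$ with $K_3$ (a $P_3$ has only two edges, so ``$q=3$ is rainbow $K_3$'' is not the relevant statement). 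As written, the proposal does not contain the ideas that make the theorem true.
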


\begin{figure}
\centering
\caption{Bounds on $f(K_n, P_v, q)$}
\label{results}
\resizebox{\columnwidth}{!}{%
\begin{tabular}{|c|c|c|c|c|} \hline
& $v$ even & $v=3,5$ & $v=7$ & $v\geq 9$, odd \\ \hline
$2\leq q \leq \floor{v/2}$ & $\Theta(n)$ & $\Theta(n)$ & $\Theta(n)$ & $\Theta(n)$ \\ \hline
$q = \frac{v+1}{2}$ & $-$ & $\Theta(n)$ & $\Omega(n^{4/3}/\log^{2/3} n)$, $O(n^{5/3})$ & $\Omega(n^{3/2}/\log n)$, $O(n^{2-2/(v-1)})$ \\ \hline
$\ceil{v/2}+1 \leq q \leq v-1$ & $\Theta(n^2)$ & $\Theta(n^2)$ & $\Theta(n^2)$ & $\Theta(n^2)$ \\ \hline
\end{tabular}%
}
\end{figure}

See Figure \ref{results} for a complete summary of the results of Section 3. The only undetermined cases are when $v \geq 7$ is odd and $q = \frac{v+1}{2}$. Here we can show that $f(K_n, P_v, q)$ is neither linear nor quadratic in $n$, but somewhere in between. It is not clear what the correct asymptotics are for these cases, so we pose the following problem.

\begin{problem}
Asymptotically determine $f(K_n, P_v, \frac{v+1}{2})$ for odd $v \geq 7$.
\end{problem}

Note that $f(K_n, P_v, q)$ is almost always either $\Theta(n)$ or $\Theta(n^2)$ (with the only notable exception mentioned above). This is in great contrast to the behavior of $f(K_n, K_p, q)$. Pohoata and Sheffer \cite{adam} showed that for $p \geq 2(m+1) \geq 6$,
\[ f\left(K_n, K_p, \binom{p}{2} - m\cdot \floor{\frac{p}{m+1}} + m + 1\right) = \Omega\left(n^{1+\frac{1}{m}}\right) ,\]
and they noted that Theorem \ref{LLL} (below) implies an upper bound of
\[ f\left(K_n, K_p, \binom{p}{2} - m\cdot \floor{\frac{p}{m+1}} + m + 1\right) = O\left( n^{1 + \frac{1}{m} + \ep(p)} \right) ,\]
where $\ep(p)$ goes to zero as $p$ grows. Thus, for each value of $m$, the upper bound gets asymptotically close to the lower bound for sufficiently large $p$. This means that $f(K_n, K_p, q)$, as a function of $q$, attains arbitrarily many asymptotic values (even while ignoring subpolynomial factors), for sufficiently large $p$. Similarly tight results can be obtained for the bipartite variant $f(K_{n,n}, K_{p,p}, q)$, using essentially the same method as \cite{adam}.

Define $T(H)$, the number of `tiers' for $H$, to be the number of $q \geq 2$ such that $f(K_n, H, q)$ and $f(K_n, H, q+1)$ differ by some polynomial factor. Above we concluded that $T(K_p)$ is some unbounded function of $p$, whereas Theorem \ref{maintheorem} and the results of Section 2 show that $T(P_v), T(S_t), T(tK_2)$ are all at most $3$ for all $v$ and $t$, where $S_t$ and $tK_2$ is a star and a matching on $t$ edges, respectively. It would be interesting to know for what other classes $\mathcal{H}$ of graphs is $T(H)$ bounded for all $H \in \mathcal{H}$. This may be the case when the graphs of $\mathcal{H}$ are sufficiently sparse, perhaps if they were all trees.

\subsection{Notation and Terminology}

The number of edges of $H$ is denoted by $e(H)$, while the number of vertices of $H$ is denoted by $v(H)$. A copy of $H$ in $G$ is a subgraph of $G$ isomorphic to $H$. All colorings are edge-colorings. For a color $c$, the $c$-degree of a vertex $u$, denoted $d_c(u)$, is the number of edges of color $c$ incident to $u$. The Tur\'an number of $H$, denoted $\ex(n, H)$ is the largest number of edges an $n$-vertex graph may have without containing a copy of $H$. For a positive integer $t$, $tH$ is the disjoint union of $t$ copies of $H$.

We employ the following asymptotic notation throughout: $f(n) = O(g(n))$ means that there exists $c>0$ and $n_0$ such that $f(n) \leq c g(n)$ for all $n \geq n_0$. Similarly, $f(n) = \Omega(g(n))$ corresponds to $f(n) \geq c g(n)$. If $f(n) = O(g(n))$ and $f(n) = \Omega(g(n))$, then we say $f(n) = \Theta(g(n))$. All logarithms are base $2$.

It will often be helpful to think of $f(G, H, q)$ in terms of \textbf{repeated colors}. If color $c$ appears on exactly $r+1$ edges in a coloring of $H$, then we say color $c$ is repeated $r$ times. We say $H$ has $r$ `repetitions' or `repeats' if $r=\sum r_i$, where color $c_i$ is repeated $r_i$ times, the sum being over all colors. Note that if $H$ has $r$ repetitions and contains exactly $q$ colors, then $r+q = e(H)$. With this in mind, let $\f(G, H, r) = f(G, H, e(H)-r)$. For example, the number of colors required so that there are no `isosceles' triangles is $\f(K_n, K_3, 0) = f(K_n, K_3, 3)$.

\section{General Observations}

A wide-reaching upper bound on $f(G, H, q)$ is achieved with a simple application of the Lov\'asz Local Lemma. This idea first appeared in \cite{gyarfas} but was stated in full generality in \cite{axenovich}. The following bound is close to asymptotically tight in many cases when $q$ is large.

\begin{theorem}\label{LLL}
Let $v(H) = v$ and $e(H) = e$. Then $f(K_n, H, q) = O\left(n^{\frac{v-2}{e-q+1}}\right)$, or equivalently $\f(K_n, H, r) = O\left(n^{\frac{v-2}{r+1}}\right)$.
\end{theorem}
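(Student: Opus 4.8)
The plan is to use the Lov\'asz Local Lemma to show that a uniformly random edge-coloring of $K_n$ with $N := C n^{(v-2)/(e-q+1)}$ colors, for a suitable constant $C$, has positive probability of being an $(H,q)$-coloring; the bound on $N$ is then exactly the claimed $O(n^{(v-2)/(e-q+1)})$. Since $q = e - r$, having at least $q$ colors on a copy of $H$ is the same as having at most $r$ repetitions, i.e.\ avoiding the ``bad event'' that some set of $r+1$ edges of the copy is monochromatic. The natural bad events to take are: for each copy $H'$ of $H$ in $K_n$ and each set $S$ of $r+1$ edges of $H'$ that could in principle be made monochromatic (really one only needs, say, a spanning-type selection, but it is cleanest to range over all $(r+1)$-subsets of $E(H')$), let $A_{H',S}$ be the event that all edges in $S$ receive the same color. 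If no $A_{H',S}$ occurs then every copy of $H$ has at most $r$ repeats and hence at least $q$ colors.

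Next I would estimate the two ingredients the symmetric LLL needs. The probability of a single bad event $A_{H',S}$ is $\Pr[A_{H',S}] = N^{-r}$, since once the color of one edge of $S$ is fixed, the other $r$ edges must match it, each independently with probability $1/N$. For the dependency degree, note that $A_{H',S}$ is mutually independent of all events $A_{H'',S''}$ with $S'' \cap S = \emptyset$ (the random variables here are the colors of individual edges, so two bad events are dependent only if their edge sets intersect). An event $A_{H'',S''}$ depends on $A_{H',S}$ only if $S''$ shares an edge with $S$; since $|S| = r+1$ is a constant and each edge of $K_n$ lies in $O(n^{v-2})$ copies of $H$ (choosing the remaining $v-2$ vertices of the copy), and each copy contributes a constant number $\binom{e}{r+1}$ of sets $S''$, the dependency degree is $D = O(n^{v-2})$.

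The symmetric LLL requires $e \cdot p \cdot (D+1) \le 1$, i.e.\ $e \cdot N^{-r} \cdot O(n^{v-2}) \le 1$, which holds as soon as $N^r \ge c' n^{v-2}$ for an appropriate constant $c'$, i.e.\ $N = \Theta(n^{(v-2)/r})$ suffices. Writing $r = e - q + 1 \cdot \dots$ — more precisely, using $r + q = e$ so that $r = e - q$, and recalling the statement asks for exponent $(v-2)/(e-q+1) = (v-2)/(r+1)$ — one sees the version above with exponent $(v-2)/r$ is actually \emph{stronger}; the paper's stated exponent $(v-2)/(r+1)$ follows a fortiori (the extra $+1$ only makes $N$ larger), and in fact is the more robust bound to quote because the cleaner argument that ranges over \emph{all} $(r+1)$-subsets without worrying about which ones are ``genuinely'' monochromatizable still gives it. I would present the computation with the exponent that the LLL calculation naturally yields and then observe it implies the stated bound.

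The main obstacle is not conceptual but bookkeeping: getting the dependency degree estimate $D = O(n^{v-2})$ right, i.e.\ carefully counting that a fixed edge of $K_n$ lies in only $O(n^{v-2})$ labelled copies of $H$ and that each such copy contributes only a bounded number of bad events, so that the total number of bad events sharing an edge with a given one is $O(n^{v-2})$ and the LLL inequality reduces to a single inequality in $N$ and $n$. One should also double-check the trivial boundary behavior (when $e - q + 1 \le 0$ there is nothing to prove since fewer than that many colors is impossible, and when the exponent exceeds $2$ the bound is weaker than the trivial $f \le \binom{n}{2}$), but these are remarks rather than real steps. The equivalence of the two displayed forms is immediate from $r = e - q$ and needs only a one-line justification.
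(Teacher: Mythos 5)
The paper does not actually prove Theorem \ref{LLL} itself (it cites \cite{gyarfas} and \cite{axenovich}), but your proposal has two genuine and related gaps when set against the standard Local Lemma argument. First, your family of bad events does not capture all the ways a copy of $H$ can fail to receive $q$ colors. A copy fails exactly when it has at least $r+1=e-q+1$ repetitions, but those repetitions may be spread over many colors: for instance, $r+1$ disjoint monochromatic pairs in $r+1$ distinct colors give $r+1$ repeats while no three edges share a color, so no event $A_{H',S}$ with $|S|=r+1\geq 3$ occurs. (Worse, a single monochromatic set of $r+1$ edges contributes only $r$ repeats, which is \emph{permitted}, so your events are not even bad events.) The correct events are indexed by a copy $H'$ together with a forest with $r+1$ edges on the vertex set $E(H')$, the event being that each forest edge is a monochromatic pair of edges of $H'$. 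A copy has at most $q-1$ colors if and only if such a forest of monochromatic pairs exists: in the auxiliary graph on $E(H')$ joining equally colored edges, at most $q-1$ components forces a spanning forest with at least $e-q+1$ edges. Such an event has probability exactly $N^{-(r+1)}$, each copy contributes only a constant number of them, and the dependency degree is still $O\left(n^{v-2}\right)$, so the symmetric LLL gives $N=\Theta\left(n^{(v-2)/(r+1)}\right)$ as claimed.

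Second, your closing ``a fortiori'' step runs in the wrong direction. Your calculation (with event probability $N^{-r}$) yields $N=\Theta\left(n^{(v-2)/r}\right)$, and since $(v-2)/r>(v-2)/(r+1)$, this is a \emph{weaker} upper bound than the one stated; the theorem's bound does not follow from it. It is precisely the probability $N^{-(r+1)}$ of the forest event, rather than $N^{-r}$, that produces the denominator $r+1$ in the exponent, so repairing the first gap automatically repairs the second. Your dependency-degree bookkeeping and the boundary-case remarks are fine as stated.
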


In the tradition of Erd\H{o}s and Gy\'arf\'as \cite{gyarfas}, we ask what are the linear and quadratic thresholds for any graph $H$. The question of the linear threshold was solved for connected $H$ in \cite{axenovich}, whose result and proof we give here in slightly more generality for completeness.



\begin{theorem}\label{lin}
Let $c$ be the number of connected components of $H$, let $v(H) = v$, and let $e(H) = e$. Then $f(K_n, H, e-v+2+c) = \Omega(n)$ and $f(K_n, H, e-v+2) = O\left(n^{1-\frac{1}{v-1}}\right)$.
\end{theorem}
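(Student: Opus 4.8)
The upper bound is immediate from Theorem \ref{LLL}. Writing it in the repeats language, $f(K_n, H, e-v+2) = \f(K_n, H, v-2)$, and Theorem \ref{LLL} with $r = v-2$ gives $\f(K_n, H, v-2) = O\left(n^{(v-2)/(v-1)}\right) = O\left(n^{1-\frac{1}{v-1}}\right)$. So only the lower bound needs work, and the plan is to show that any edge-coloring of $K_n$ with fewer than $\frac{n-1}{2(v-1)}$ colors contains a copy of $H$ using at most $e - v + c + 1$ colors; since $e-v+c+1 < e-v+c+2$, such a coloring is not an $(H, e-v+c+2)$-coloring, whence $f(K_n, H, e-v+c+2) \geq \frac{n-1}{2(v-1)} = \Omega(n)$.

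So suppose $K_n$ is colored with $k < \frac{n-1}{2(v-1)}$ colors. By pigeonhole some color $c_0$ appears on at least $\binom{n}{2}/k > (v-1)n$ edges; call this color class $G_0$. Since $e(G_0) > (v-1)n$, repeatedly deleting a vertex of degree less than $v-1$ cannot delete all vertices (each deletion removes at most $v-2$ edges, so at most $(v-2)n < e(G_0)$ in total), so this process terminates at a nonempty subgraph $G_0'$ of minimum degree at least $v-1$; note $|V(G_0')| \geq v$.

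Next I would greedily embed a spanning forest $F$ of $H$ into $G_0'$. Because $H$ has $c$ components, $F$ has exactly $v - c$ edges. Process the components of $F$ one at a time, and within each component build its tree one leaf at a time according to a fixed rooted ordering. When a new vertex must be placed, it is either the root of a new component, placed on any unused vertex of $G_0'$ (one exists since $|V(G_0')| \geq v$ and at most $v-1$ vertices are in use), or a leaf adjacent to an already-placed vertex $u$; then $u$ has at least $v-1$ neighbors in $G_0'$, at most $v-2$ of which are already used, so a free neighbor is available. This yields an injection $\varphi \colon V(H) \to V(G_0')$ under which every edge of $F$ has color $c_0$. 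Since $K_n$ is complete, $\varphi$ spans a copy of $H$ in $K_n$: its $v - c$ forest edges all have color $c_0$, and the remaining $e - (v-c)$ edges contribute at most $e - v + c$ further colors, for a total of at most $e - v + c + 1$ colors — the desired copy.

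This is essentially the argument of \cite{axenovich}; the only new ingredient is the bookkeeping for disconnected $H$, namely that a spanning forest of an $H$ with $c$ components has $v - c$ edges rather than $v - 1$, which is exactly what moves the threshold from $e - v + 3$ (the connected case) to $e - v + c + 2$. There is no serious obstacle; the one point requiring a little care is checking that the greedy forest embedding never gets stuck, which I expect to be the most delicate step and which follows from the minimum-degree bound on $G_0'$ together with the fact that at most $v-1$ vertices are ever in use.
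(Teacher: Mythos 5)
Your proof is correct and is essentially the paper's argument: both pigeonhole a large color class, extract a subgraph of large minimum degree by deleting low-degree vertices, and greedily embed a spanning forest $F$ of $H$ monochromatically, yielding a copy of $H$ with at most $e-v+c+1$ colors. The paper packages this as the bound $\ex(n,T)\leq vn$ for a tree $T\supseteq F$ on $v$ vertices (with the identical deletion-and-greedy-embedding argument relegated to a footnote), so the only cosmetic difference is that you embed the forest directly rather than via a spanning tree.
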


\begin{proof}
The upper bound comes from Theorem \ref{LLL}.

For the lower bound, we show that if one color class is too large, we can find a copy of $H$ with at least $v-1-c$ repeats. Let $F$ be an edge-maximal spanning forest of $H$ (since $H$ may not be connected), and let $T$ be a tree on $v$ vertices which contains $F$. Note that $T$ consists of $F$ along with $c-1$ more edges. It is well known that $\ex(n, T) \leq vn$, that is, any graph on $n$ vertices with at least $vn$ edges contains every tree on $v$ vertices.\footnote{Indeed, a graph on $n$ vertices with $vn$ edges has average degree $2v$. Upon repeatedly deleting the vertices of degree less than $v$, we have removed fewer than $vn$ edges, and so we have a nonempty graph of minimum degree at least $v$, into which we can embed $T$. In fact, the Erd\H{o}s-S\'os conjecture states that $\ex(n, T) \leq (v-2)n/2$, which Ajtai, Koml\'os, Simonovits, and Szemer\'edi have shown to be true for sufficiently large $v$ and $n$ (unpublished, see e.g., Section 6 of \cite{trees}).} If some color in a coloring of $K_n$ contains more than $vn$ edges, then we can find a monochromatic $T$ and thus a monochromatic $F$, which has $v-c-1$ repeats. This in turn gives a copy of $H$ with at least $v-c-1$ repeats. If we require each copy of $H$ to have at most $v-c-2$ repeats, then we must use at least $\binom{n}{2} / (vn)$ colors. More succinctly,
\[ \f(K_n, H, v-c-2) \geq \f(K_n, F, v-c-2) \geq \f(K_n, T, v-3) \geq \binom{n}{2}/\ex(n,T) = \Omega(n) .\]
\end{proof}

Note that when $H$ is connected, Theorem \ref{lin} gives the linear threshold as $q=e-v+3$. What is the best one can do when $H$ is disconnected? The threshold of $q=e-v+2+c$ given by Theorem \ref{lin} is still correct for forests, since in that case $e-v+2+c = 2$. On the other hand, it is not immediately clear if the linear threshold for $H=2K_3$ is $q=4$ or $q=3$.

We now turn to the quadratic threshold, the smallest $q$ for which $f(K_n, H, q)$ is quadratic in $n$. We can answer this question exactly for some graphs, specifically those with a perfect matching and maximum degree at least $v/2$, where $v$ is the number of vertices:

\begin{prop}\label{exactquad}
Suppose $v(H) = v$, $H$ has a matching of size $\floor{v/2}$, and $H$ has a vertex of degree at least $\floor{v/2}$. Letting $e(H) = e$, we have $f(K_n, H, e-\floor{v/2}+2) = \Theta\left(n^2\right)$ and $f(K_n, H, e-\ceil{v/2}+1) = O\left(n^{2-4/v}\right)$.
\end{prop}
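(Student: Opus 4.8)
The plan is to prove the quadratic lower bound $f(K_n, H, e - \floor{v/2} + 2) = \Omega(n^2)$ by a counting argument of the same flavor as the proof of Theorem~\ref{lin}: if too few colors are used, some color class is dense, and a dense graph on $n$ vertices contains many copies of a small structure which, when completed to a copy of $H$, has too many repeats. The natural candidate here is a matching: a graph with $\Omega(n^2)$ edges contains a matching of size $\floor{v/2}$ (indeed any graph with more than $(k-1)n$ edges has a matching of size $k$), and a monochromatic matching of size $\floor{v/2}$ inside a copy of $H$ contributes $\floor{v/2} - 1$ repeats. So if we demand every copy of $H$ has at most $\floor{v/2} - 2$ repeats, i.e. at least $e - \floor{v/2} + 2$ colors, then no color class can have more than $c n$ edges for an appropriate constant $c$, forcing $\binom{n}{2}/(cn) = \Omega(n)$ colors --- but that only gives linear, not quadratic. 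The point of the extra hypotheses is to boost this: we need a color class of size $\Omega(n^2)$ (not just $\Omega(n)$) to be forbidden, so that $o(n^2)$ colors is already a contradiction.

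To get that boost, I would exploit the degree hypothesis together with the matching hypothesis simultaneously. Let $u$ be a vertex of $H$ of degree $\geq \floor{v/2}$, and let $M$ be a matching of size $\floor{v/2}$ in $H$; note $M$ saturates all but at most one vertex of $H$ (since $2\floor{v/2} \geq v-1$). The idea is: given a coloring of $K_n$ with few colors, pick a color $c$ with $\ge \binom n2 / (\text{\#colors})$ edges. If $\#\text{colors} = o(n^2)$ then this class has $\omega(1)\cdot n$ edges; in fact we can find a vertex $x$ of $K_n$ incident to $\omega(1)$ edges of color $c$ (a "color-degree" argument). Then we want to build a monochromatic copy inside which $u \mapsto x$ realizes many of its edges in color $c$, while still finding, on the remaining vertices, a color-$c$ matching covering the rest of $H$ --- giving repeats coming from the star at $u$ plus repeats coming from the matching, and these overlap in a controlled way (the matching can avoid the star edges except possibly one). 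Counting carefully, a single vertex of color-$c$-degree $\Omega(n)$ together with a color-$c$ matching of size $\Omega(n)$ in the rest yields a copy of $H$ with at least $\floor{v/2} - 1$ repeats, which is one more than allowed. The key quantitative fact is the standard $\ex(n, \text{matching of size } k) = \max\{\binom{2k-1}{2}, (k-1)(n-k+1)+\binom{k-1}{2}\} = O(kn)$, so a color class of size $\gg vn$ contains a matching of size $\floor{v/2}$, and combined with a high color-degree vertex we overshoot the repeat budget. This shows any color class has size $O(n)$ is \emph{not} enough; we actually need the sharper statement that some color class being merely $\omega(n)$ already forces the contradiction, which follows once we verify the star-plus-matching embedding works whenever the color-$c$-degree of some vertex is large, regardless of the total size of the class.

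For the upper bound $f(K_n, H, e - \ceil{v/2} + 1) = O(n^{2 - 4/v})$, I would invoke Theorem~\ref{LLL} directly: with $r = \ceil{v/2} - 1$ repeats allowed we get $\f(K_n, H, r) = O(n^{(v-2)/(r+1)}) = O(n^{(v-2)/\ceil{v/2}})$, and one checks $(v-2)/\ceil{v/2} \le 2 - 4/v$ for all $v \ge 3$ (for $v$ even this is equality up to the floor/ceil; for $v$ odd it is a short inequality). So this half is essentially bookkeeping.

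The main obstacle will be the embedding step in the lower bound: making precise that a high color-$c$-degree vertex plus a large color-$c$ matching on the complementary vertex set can always be assembled into a \emph{copy of $H$} (not merely of some auxiliary graph) realizing $\floor{v/2} - 1$ repeats in color $c$. One must choose the image of $u$ to be the high-degree vertex, route $\deg_H(u) \ge \floor{v/2}$ of its incident edges to color-$c$ neighbors, and then greedily extend to an embedding of all of $H$ while simultaneously locating the color-$c$ matching on the non-star vertices --- handling the overlap between $u$'s star and $M$ (at most one matching edge meets $u$) and ensuring enough room is left (this is where the $\Omega(n^2)$-size, or $\omega(n)$-color-degree, assumption is spent). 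Once the embedding lemma is in place, the color-counting conclusion $f(K_n,H,e-\floor{v/2}+2) = \Omega(n^2)$ is immediate, and the matching $\Theta(n^2)$ follows since the trivial rainbow coloring gives $f \le \binom n2$.
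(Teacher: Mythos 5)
Your upper bound via Theorem \ref{LLL} is exactly what the paper does, and the arithmetic $(v-2)/\ceil{v/2} \leq 2-4/v$ checks out. The lower bound, however, has a genuine gap, starting with the quantitative setup. To conclude that $\Omega(n^2)$ colors are needed you must show that \emph{every} color class has $O(1)$ edges; forbidding color classes of size $\omega(n)$ (your stated target) only yields $\Omega(n)$ colors, and the pigeonhole step is also off: if only $o(n^2)$ colors are used, the largest color class is guaranteed only $\omega(1)$ edges, not $\omega(n)$ --- with $n^{3/2}$ colors the average class has about $n^{1/2}$ edges. Moreover, a color class with many edges need not contain a vertex of large $c$-degree at all (it could be a large matching), so the ``color-degree argument'' producing your vertex $x$ can fail; symmetrically, a class that is a single large star contains no matching of size two, so the combined star-plus-matching configuration you want to embed simply need not exist in any color class. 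Even granting the embedding lemma, your conclusion would not follow.

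The missing idea --- and the paper's actual argument --- is to use the two hypotheses on $H$ disjunctively rather than conjunctively. A monochromatic star with $\floor{v/2}$ edges alone embeds at the vertex of $H$ of degree at least $\floor{v/2}$ and already produces a copy of $H$ with $\floor{v/2}-1$ repeats; a monochromatic matching of size $\floor{v/2}$ alone embeds as the matching of $H$ and does the same. If a color class contains neither structure, then the endpoints of a maximal matching in that class form a vertex cover of size less than $v$, each vertex of which has $c$-degree less than $\floor{v/2}$, so the class has fewer than $v^2/2$ edges --- a constant depending only on $v$. Hence at least $\frac{2}{v^2}\binom{n}{2} = \Omega(n^2)$ colors are used. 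No combined embedding, and no assumption that a single color class carries both a large star and a large matching, is required.
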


\begin{proof}
The upper bound $f(K_n, H, e-\ceil{v/2}+1) = O\left(n^{2-4/v}\right)$ comes from Theorem \ref{LLL}. The upper bound $f(K_n, H, e-\floor{v/2}+2) = O\left(n^2\right)$ is trivial.

The lower bound $\f(K_n, H, \floor{v/2}-2) = \Omega\left(n^2\right)$ is obtained as follows. Suppose we have a coloring of $K_n$ where every copy of $H$ has at most $\floor{v/2}-2$ repeats. If there is either a monochromatic matching or a monochromatic star with $\floor{v/2}$ edges, then there is a copy of $H$ with at least $\floor{v/2}-1$ repeats. For a given color, the endpoints of a maximal matching form a vertex cover of size less than $v$, each vertex of which is incident to fewer than $\floor{v/2}$ edges. Thus each color class has size less than $v^2/2$, and so there are at least $\frac{2}{v^2} \binom{n}{2}$ colors.
\end{proof}

Note that the proof of Proposition \ref{exactquad} relies on only two forbidden substructures, both of which are monochromatic. The more advanced proof of Pohoata and Sheffer \cite{adam} uses two forbidden substructures, but only one is monochromatic. The other contains many colors and is crucial to obtaining lower bounds of the form $\Omega\left(n^{\alpha}\right)$ for fractional $\alpha$ (for example, in Proposition \ref{pathodd} below).

Proposition \ref{exactquad} clearly generalizes to the following statement.

\begin{prop}\label{quad}
If $H$ has a matching of size $b$ and a vertex of degree at least $b$, then $f(K_n, H, e(H)-b+2) = \Theta\left(n^2\right)$, or equivalently, $\f(K_n, H, b-2) = \Theta\left(n^2\right)$.
\end{prop}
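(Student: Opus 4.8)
The plan is to reuse the proof of Proposition~\ref{exactquad} essentially verbatim: there $\floor{v/2}$ played two roles, namely a lower bound on a matching size in $H$ and a lower bound on a vertex degree in $H$, and replacing it with an arbitrary common value $b$ changes nothing. Implicitly $b \ge 2$ here, since otherwise $\f(K_n, H, b-2)$ is not defined. I would first record the upper bound $f(K_n, H, e(H)-b+2) = O(n^2)$, which is immediate because coloring all $\binom{n}{2}$ edges of $K_n$ distinctly makes every copy of $H$ rainbow.

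For the lower bound $\f(K_n, H, b-2) = \Omega(n^2)$ I would argue by contraposition: assume $K_n$ is colored so that every copy of $H$ has at most $b-2$ repeats, and derive that every color class is small. The crucial step is to show that this coloring forbids two monochromatic substructures. If some color $c$ spanned a matching of size $b$, then, taking a matching $M$ of size $b$ in $H$ and embedding $H$ into $K_n$ so that $M$ maps onto that monochromatic matching, we would obtain a copy of $H$ with $b$ edges of a single color, hence at least $b-1$ repeats --- a contradiction. Similarly, if some vertex had $c$-degree at least $b$, then using a vertex of $H$ of degree at least $b$ and mapping $b$ of its incident edges onto a monochromatic star would again give at least $b-1$ repeats. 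Both embeddings exist because $K_n$ is complete and $n$ is large.

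It then remains to bound the size of a color class. Fix a color $c$; since its class contains no matching of size $b$, the endpoints of a maximal matching in it form a vertex cover of size less than $2b$, and each of these vertices is incident to fewer than $b$ edges of color $c$ by the star bound. Hence each color class has fewer than $2b^2$ edges, so the coloring uses at least $\binom{n}{2}/(2b^2) = \Omega(n^2)$ colors, completing the argument.

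I do not expect a genuine obstacle. The two points deserving a careful sentence are the embedding arguments in the second paragraph (routine, since $H$ fits inside $K_n$ with a prescribed matching or a prescribed star lying on the chosen monochromatic substructure) and the elementary fact that a graph with no matching of size $b$ has a vertex cover of size at most $2(b-1)$.
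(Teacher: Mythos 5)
Your proposal is correct and is exactly the argument the paper intends: Proposition \ref{quad} is stated as an immediate generalization of Proposition \ref{exactquad}, whose proof you reproduce with $\floor{v/2}$ replaced by $b$ (two forbidden monochromatic substructures, a matching and a star of size $b$, forcing every color class to have $O(1)$ edges). The only cosmetic difference is your slightly looser constant $2b^2$ in place of the paper's $v^2/2$, which is irrelevant to the $\Omega(n^2)$ conclusion.
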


One may hope that the quadratic threshold for $H$ is always the one given in Proposition \ref{exactquad}, $q = e-\floor{v/2}+2$, even when the hypotheses are not satisfied. In fact, this is the case for paths (see Theorem \ref{maintheorem}). However, there are three extreme cases where the quadratic threshold does not even exist: a star with $t$ edges, denoted $S_t$, a matching with $t$ edges, denoted $t K_2$, and the triangle. In the following it is understood that $t \geq 2$. Recall that $\f(K_n, H, 0) = f(K_n, H, e(H))$ is the minimum number of colors needed so that every copy of $H$ is `rainbow.'

\begin{coloring}\label{order}
Label the vertices $1, \dots, n$ and color the edge between $i$ and $j$ with color $i$ if $i < j$. This shows $\f(K_n, tK_2, 0) = O(n)$.
\end{coloring}

In fact, if we color the edge between vertex $n-1$ and vertex $n$ with color $n-2$, this gives a coloring showing that $\f(K_n, tK_2, 0) \leq n-2$. To see that $\f(K_n, tK_2, 0) \geq n-2$, note that every color class must either be a star or a triangle. The number of edges covered by $k$ stars is at most $(n-1) + \cdots + (n-k)$, so the number of edges covered by $n-3$ of these color classes is at most $(n-1) + \cdots + 3 < \binom{n}{2}$.

\begin{coloring}\label{fact}
The edge set of $K_n$ can be partitioned into matchings of size $\floor{n/2}$ (when $n$ is even this partition is known as a $1$-factorization). Color the edges of $K_n$ according to which matching contains them. This coloring contains either $n$ or $n-1$ colors, and shows that $\f(K_n, S_t, 0) = O(n)$.
\end{coloring}

This coloring actually shows that $\f(K_n, S_t, 0) = \f(K_n, K_3, 0) = 2\ceil{n/2}-1$, since each color class must be a matching, and in a $1$-factorization all the color classes attain their maximum size. Similarly, we have $\f(K_n, K_3, 0) = 2\ceil{n/2}-1$.

\begin{cor}\label{cor}
If $\f(K_n, H, 0) = O\left(n^{2-\ep}\right)$ for some $\ep>0$, then $H$ is either a matching, a star, or a triangle. In that case, $\f(K_n, H, 0) = \Theta(n)$.
\end{cor}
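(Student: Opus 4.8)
The plan is to prove the contrapositive of the first assertion: if $H$ is not a matching, not a star, and not a triangle, then $\f(K_n, H, 0) = \Theta(n^2)$, which is certainly not $O\!\left(n^{2-\ep}\right)$. Throughout I would assume $H$ has no isolated vertices and has at least two edges; isolated vertices affect neither $\f(K_n, H, 0)$ for large $n$ (a copy of $H$ is rainbow iff the induced copy of the isolated-vertex-free part is) nor whether $H$ is a matching, star, or triangle, and the case $e(H)\le 1$ is trivial.

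First I would record two easy facts. Since demanding more colors on each copy of $H$ can only increase the total number of colors required, $f(K_n, H, q)$ is non-decreasing in $q$; hence $\f(K_n, H, r) = f(K_n, H, e(H)-r)$ is non-increasing in $r$, so $\f(K_n, H, 0) \geq \f(K_n, H, r)$ for all $r \geq 0$. Also $\f(K_n, H, 0) \leq \binom{n}{2} = O(n^2)$ trivially. So it suffices to exhibit some $r \geq 0$ with $\f(K_n, H, r) = \Omega(n^2)$.

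For this I would apply Proposition \ref{quad}: if $H$ has a matching of size $b$ and a vertex of degree at least $b$, then $\f(K_n, H, b-2) = \Theta(n^2)$. Let $\nu = \nu(H)$ be the matching number and $\Delta = \Delta(H)$ the maximum degree, and set $b = \min(\nu, \Delta)$; then $H$ has a matching of size $b$ and a vertex of degree $\geq b$. If $b \geq 2$, i.e.\ $\nu \geq 2$ and $\Delta \geq 2$, Proposition \ref{quad} with $r = b-2 \geq 0$ gives exactly what we need. So the heart of the argument is the structural claim: a graph without isolated vertices having $\nu(H) \leq 1$ \emph{or} $\Delta(H) \leq 1$ is a matching, a star, or a triangle. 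Indeed, if $\Delta(H) \leq 1$ then every component is an edge, so $H = tK_2$; and if $\nu(H) \leq 1$ then any two edges of $H$ meet, so $E(H)$ is an intersecting family of $2$-element sets, which a short case check shows either has a common vertex (so $H = S_t$) or is precisely $\{xy, yz, zx\}$ (so $H = K_3$). This establishes the contrapositive.

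For the final sentence, when $H$ is a matching, star, or triangle the matching bounds are already in hand: Coloring \ref{order} together with the counting that follows gives $\f(K_n, tK_2, 0) = \Theta(n)$; Coloring \ref{fact} and the remark after it give $\f(K_n, S_t, 0) = \f(K_n, K_3, 0) = 2\ceil{n/2} - 1 = \Theta(n)$, using that for $t \geq 2$ a copy of $S_t$, and likewise a triangle, is rainbow iff the coloring restricted to it is proper. I expect the only real obstacle to be stating the matching-number-at-most-one classification cleanly; everything else is bookkeeping with the monotonicity of $\f$ and an invocation of Proposition \ref{quad}.
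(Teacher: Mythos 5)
Your proof is correct and follows essentially the same route as the paper: invoke Proposition \ref{quad} for any $H$ with a matching of size two and a vertex of degree at least two, observe that the only remaining graphs are matchings, stars, and the triangle, and then cite Colorings \ref{order} and \ref{fact} for the linear bounds. The only (harmless) detour is taking $b=\min(\nu,\Delta)$ and appealing to monotonicity of $\f$ in $r$; the paper simply applies Proposition \ref{quad} with $b=2$, which yields $\f(K_n,H,0)=\Theta(n^2)$ directly.
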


\begin{proof}
If $H$ has a matching of size two and a vertex of degree at least two, then $\f(K_n, H, 0) = \Theta\left(n^2\right)$ by Proposition \ref{quad}. The only graphs without those properties are matchings, stars, and the triangle.
\end{proof}

\section{Paths}

We now turn our attention to the asymptotics of $f(K_n, P_v, q)$, where $P_v$ is a path on $v \geq 4$ vertices. (Recall that the asymptotics of $P_3 = S_2$ were found in Section 2.)  We show the range of $q$ for which $f(K_n, P_v, q)$ is linear in $n$ in Proposition \ref{pathlin}, and the range for which it is quadratic in $n$ in Theorem \ref{pathquad}. The only gap in these ranges is bounded in Proposition \ref{pathodd}. See Figure \ref{results} for a summary of these results.

\begin{prop}\label{pathlin}
For every $2 \leq q \leq \floor{v/2}$, we have $f(K_n, P_v, q) = \Theta(n)$.
\end{prop}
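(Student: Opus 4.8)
The plan is to establish matching upper and lower bounds of order $n$. For the upper bound, I would simply invoke Theorem~\ref{LLL}: since $P_v$ has $v$ vertices and $e = v-1$ edges, we get $f(K_n, P_v, q) = O\!\left(n^{(v-2)/(e-q+1)}\right) = O\!\left(n^{(v-2)/(v-q)}\right)$. For this exponent to be at most $1$ we need $v-q \geq v-2$, i.e. $q \leq 2$; this only gives the upper bound for $q=2$. For larger $q$ up to $\floor{v/2}$ the LLL bound is not linear, so instead I expect the upper bound should come from an explicit construction: take the ``order'' coloring (Coloring~\ref{order}) or a related layered scheme with $\Theta(n)$ colors, and argue directly that any copy of $P_v$ in it sees at least $\floor{v/2}$ colors. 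In Coloring~\ref{order}, color $i$ forms a star centered at $i$ (toward larger-indexed vertices), so a monochromatic subgraph in color $i$ is a star; a path $P_v$ visiting $v$ vertices must change color frequently, and one can count that it uses at least $\ceil{(v-1)/2} = \floor{v/2}$ distinct colors — roughly because consecutive edges of the same color would have to share the lower endpoint, forcing the pattern to alternate. So any copy of $P_v$ gets at least $\floor{v/2}$ colors, hence at least $q$ colors for all $q \le \floor{v/2}$, giving $f(K_n, P_v, q) = O(n)$ uniformly in this range.

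For the lower bound it suffices (monotonicity in $q$) to prove it for the largest value $q = \floor{v/2}$, equivalently $\f(K_n, P_v, r) = \Omega(n)$ with $r = e - q = (v-1) - \floor{v/2} = \ceil{v/2} - 1$. This should follow the template of Theorem~\ref{lin}: show that if some color class is too large — say has more than $cn$ edges for a suitable constant $c = c(v)$ — then one can find a copy of $P_v$ with at least $\ceil{v/2}$ repeats, i.e. using at most $v-1-\ceil{v/2} = \floor{v/2}-1 < q$ colors. The key sub-claim is that a sufficiently dense graph contains a path $P_v$ most of whose edges lie in one color class. Concretely: a graph on $n$ vertices with at least $cn$ edges contains, after cleaning to minimum degree $\geq v$, a path on $v$ vertices that is \emph{monochromatic} in the heavy color if that color alone already has $\ge vn$ edges — but here we only need many (not all) edges of the path to repeat. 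So I would instead argue: a monochromatic path or long monochromatic subpath of length $\ceil{v/2}$ in the heavy color, together with $v-1-\ceil{v/2}$ further edges extending it arbitrarily in $K_n$, gives a $P_v$ with $\ge \ceil{v/2}-1$ repeats among the monochromatic part. Since a color class with $> vn$ edges contains a monochromatic $P_{\ceil{v/2}+1}$ (again by $\ex(n, P_k) \le kn$), which has $\ceil{v/2}$ edges all the same color hence $\ceil{v/2}-1$ repeats, we can extend this sub-path to a full $P_v$ in $K_n$, producing a copy of $P_v$ with at least $\ceil{v/2}-1 = r$ repeats. Forbidding this forces every color class to have at most $vn$ edges, so at least $\binom{n}{2}/(vn) = \Omega(n)$ colors are needed.

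I expect the main obstacle to be the careful bookkeeping in the lower bound: one must verify that a short monochromatic subpath in the dense color can genuinely be extended to a $P_v$ inside $K_n$ while keeping the subpath's edges intact and internally disjoint from the extension, and that the repeat count of the resulting $P_v$ really is at least $r = \ceil{v/2}-1$. (Extension is easy in the complete graph — just pick fresh vertices — so the real content is the counting.) A secondary subtlety is the exact color-count in the upper-bound construction: one needs the claim ``every $P_v$ in Coloring~\ref{order} sees $\ge \floor{v/2}$ colors'' to be tight and correctly proved, most cleanly by observing that in each color the monochromatic graph is a star, so a path meeting $k$ vertices of a single color class uses those vertices as a star and hence contributes at most $2$ path-edges per color (the two edges at the star center), which caps the number of same-colored edges and forces $\ge \ceil{(v-1)/2} = \floor{v/2}$ colors. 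Combining the two directions yields $f(K_n, P_v, q) = \Theta(n)$ for all $2 \le q \le \floor{v/2}$.
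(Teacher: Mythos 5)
Your upper bound is correct and is essentially the paper's: Coloring \ref{order} makes every color class a star, so a path (having maximum degree $2$) uses each color on at most two, necessarily consecutive, edges, hence every $P_v$ receives at least $\ceil{(v-1)/2}=\floor{v/2}$ colors; this gives $O(n)$ for the whole range of $q$ at once. (Equivalently, every $P_v$ contains a matching of size $\floor{v/2}$, and matchings are rainbow in this coloring.)

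The lower bound, however, contains two genuine errors. First, your monotonicity reduction points the wrong way: $f(K_n,P_v,q)$ is nondecreasing in $q$, so a lower bound at the \emph{largest} value $q=\floor{v/2}$ implies nothing for smaller $q$; the binding case is $q=2$, which is what the paper proves (Theorem \ref{lin} with $H$ connected and $e=v-1$ gives threshold $e-v+3=2$) and then transfers upward. Second, even at $q=\floor{v/2}$ your construction is off by one: the forbidden configurations are copies of $P_v$ with at most $q-1$ colors, i.e.\ with at least $r+1=\ceil{v/2}$ repeats, but a monochromatic $P_{\ceil{v/2}+1}$ extended to a $P_v$ has only $\ceil{v/2}-1=r$ repeats, which the coloring is entirely permitted to contain; so ``forbidding this'' does not force the color classes to be small. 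Both defects are repairable --- a color class with at least $vn$ edges contains a monochromatic $P_v$ outright, which has only one color and already violates the condition for $q=2$, whence $f(K_n,P_v,2)\ge\binom{n}{2}/(vn)=\Omega(n)$ and monotonicity (used in the correct direction) covers all $q\ge 2$ --- but as written your argument establishes the lower bound for no value of $q$ in the stated range.
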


\begin{proof}
\[ \Omega(n) = f(K_n, P_v, 2) \leq f(K_n, P_v, q) \leq f(K_n, P_v, \floor{v/2}) = O(n). \]
The lower bound follows by Theorem \ref{lin} and the upper bound by Coloring \ref{order}.
\end{proof}

Now we find the quadratic threshold for paths. We first give a lemma which facilitates the proofs of Theorem \ref{pathquad} and Proposition \ref{pathodd}.

\begin{lemma}\label{pathlemma}
Fix $v \geq 3$. In any coloring of $K_n$, one of three things must occur:
\begin{itemize}
\item there exists a copy of $P_v$ with at least $\floor{(v-1)/2}$ repeats,
\item $\Omega(n^2)$ colors are used, or
\item there exist at least $n^2/16k\log n$ color classes each with a matching of at least $k/16v$ edges, for some $k \geq 16v^2$.
\end{itemize}
\end{lemma}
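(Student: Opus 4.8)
The lemma is a dichotomy-or-structure statement: in any coloring of $K_n$, either we find a $P_v$ with many repeats, or we use quadratically many colors, or we can locate many color classes each containing a largish matching. My plan is to run a greedy/averaging argument on the color classes, using the fact that a color class with no large matching has a small vertex cover (König-type bound: a graph with maximum matching $\nu$ has a vertex cover of size $\le 2\nu$, so few vertices cover it), hence is "thin". Thin color classes individually can only carry $O(n)$ edges, which feeds the quadratic-colors alternative; fat color classes (those with a large matching) feed the third alternative — but we need to show we can simultaneously extract the \emph{right} matching sizes and \emph{counts}. The role of the first alternative (a $P_v$ with $\ge\floor{(v-1)/2}$ repeats) is to rule out the "middle" regime: if one color class is so dense on a small vertex set that it contains a long monochromatic path (equivalently, enough edges to force a $P_v$ inside a bounded-size vertex cover), that monochromatic near-$P_v$ already gives $\floor{(v-1)/2}$ repeats. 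So the heart is: a color class that is neither thin enough nor structured enough must contain a monochromatic path on $\ge v$ vertices, or at least a monochromatic subpath dense enough that combined with extra edges it yields the required repeats.

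\textbf{Key steps, in order.} First, fix a coloring and for each color $c$ let $\nu_c$ be its maximum matching size. Classify colors by dyadic ranges of $\nu_c$: say $c \in \mathcal{C}_k$ if $\nu_c \in [k/16v, k/8v)$ (or a similar bucketing), ranging over $k$ a power of two. For colors with $\nu_c < v$, the color class has a vertex cover of $<2v$ vertices, each of degree $<n$, so each carries $O(vn)$ edges; if all but $o(n^2)$ edges lie in such colors we get $\Omega(n^2)$ colors and we are in the second case — so otherwise a positive fraction of edges, say $\ge \binom n2/2$, lies in colors with $\nu_c \ge v$, i.e. $k \ge 16v^2$. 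Second, among these "fat" colors, note there are at most $\log n$ dyadic buckets $\mathcal{C}_k$, so by averaging some bucket $\mathcal{C}_k$ carries $\ge \binom n2/(2\log n) = \Omega(n^2/\log n)$ edges. Third, within that bucket, each color class has a matching of $\ge k/16v$ edges by construction, so it suffices to lower-bound $|\mathcal{C}_k|$; since each color in $\mathcal{C}_k$ has at most $O(k)$ vertices in its cover and hence $O(kn/v)$ edges — wait, we instead want: a color with matching number in $[k/16v, k/8v)$ has a cover of size $<k/4v$ vertices, each of degree $<n$, hence $O(kn/v)$ edges; dividing the $\Omega(n^2/\log n)$ edges of the bucket by $O(kn/v) = O(kn)$ gives $|\mathcal{C}_k| = \Omega(n/(k\log n))\cdot$… — here I need to recheck constants to land exactly on $n^2/16k\log n$ classes, but the shape is right. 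Fourth, handle the case we have been deferring: a color with very large matching number, $\nu_c \ge$ something like $v$, does not by itself give a long \emph{path}, so I must argue that if \emph{no} bucket works (all fat colors have too few edges relative to their cover size, i.e. are "efficiently spread"), then some color class is dense on a small set and Erd\H{o}s–Gallai forces a monochromatic path on $\ge v$ vertices inside it, giving $\floor{(v-1)/2}$ repeats via pairing up its edges. More carefully: a monochromatic $P_v$ has $v-1$ edges all the same color, so it trivially has $v-2 \ge \floor{(v-1)/2}$ repeats — so really I just need a monochromatic $P_v$, or even a monochromatic $P_{\lceil v/2\rceil+1}$-ish object plus padding.

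\textbf{Main obstacle.} The delicate point is the interplay between "a color class has a big matching" and "a color class contains a long path." A matching of size $k/16v$ does \emph{not} give a path of length anywhere near that — a perfect matching has no $P_3$. So the third alternative genuinely cannot be upgraded to the first, and conversely the first alternative must be triggered by a \emph{different} feature (local density / small vertex cover with high degrees), not by large matching number. I expect the real work to be setting up the buckets so that exactly one of three things happens: (a) too many edges in thin ($\nu_c < v$) colors $\Rightarrow$ quadratically many colors; (b) some color is locally dense (cover small, carrying $\gg vn$ edges) $\Rightarrow$ monochromatic $P_v$ by Erd\H{o}s–Gallai $\Rightarrow$ alternative one; (c) otherwise every fat color is "spread out" with $O(kn)$ edges per bucket-$k$ color, and a dyadic pigeonhole over $\le \log n$ buckets produces the claimed $\Omega(n^2/(k\log n))$ color classes with matchings of size $\Omega(k/v)$. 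Getting the constants $16$, $16v$, $16v^2$ to close up consistently across these three cases is the fiddly part, but it is bookkeeping, not a conceptual gap; the conceptual content is the Erd\H{o}s–Gallai step in (b) and recognizing that matching number, not path length, is the correct parameter to bucket by.
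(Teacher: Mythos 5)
Your proposal has two genuine gaps, one arithmetic and one conceptual, and the second is fatal to the whole three-way split as you have set it up. The arithmetic gap: you claim that if almost all edges lie in colors with $\nu_c < v$ (hence vertex cover $<2v$ and at most $O(vn)$ edges each), then there are $\Omega(n^2)$ colors. Dividing $\binom{n}{2}$ by $O(vn)$ gives only $\Omega(n)$ colors. To reach $\Omega(n^2)$ colors one needs the color classes to have size $O(1)$, not $O(n)$; the paper gets this by bucketing colors by \emph{number of edges} (not matching number) and arguing in the contrapositive: if for every $k \geq 16v^2$ there are at most $n^2/(8k\log n)$ classes of size $\geq k$, then a dyadic sum shows the classes of size $>32v^2$ carry at most $n^2/4$ edges, so the classes of size $\leq 32v^2$ carry $\Omega(n^2)$ edges and there are $\Omega(n^2)$ of them. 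Your step three has the same problem in the other direction (you note yourself the constants do not close): $\Omega(n^2/\log n)$ edges divided by $O(kn)$ edges per class gives $\Omega(n/(k\log n))$ classes, an entire factor of $n$ short of the required $n^2/(16k\log n)$.

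The conceptual gap is in how you trigger the first alternative. You propose to find a \emph{monochromatic} $P_v$ via Erd\H{o}s--Gallai in a locally dense color class. But no color class need contain a monochromatic $P_v$: consider the coloring where edge $ij$ ($i<j$) gets color $i$ (Coloring \ref{order} of the paper). Every color class is a star, so $\nu_c = 1$ for all $c$, there are only $n$ colors, and no color has a large matching --- none of your three outcomes fires, yet the lemma must hold. It holds via the first bullet, but the witnessing $P_v$ is not monochromatic: it is a path assembled from $\floor{(v-1)/2}$ "cherries," each consisting of two consecutive edges sharing a color, with \emph{different} colors for different cherries, each contributing one repeat. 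This is exactly what the paper's proof produces in the hard regime (many color classes of size $\geq k$, most with small matching, hence small vertex cover and many vertices of $c$-degree $\geq 4v$): it builds an auxiliary bipartite graph between $V(K_n)$ and pairs $(u,c)$ with $d_c(u) \geq 4v$, prunes to minimum degree $>v$, and greedily extracts a path that projects to a $P_v$ in $K_n$ made of distinctly colored cherries. Your own remark that the first alternative "must be triggered by local density, not matching number" is the right instinct, but channeling it exclusively into a monochromatic path via Erd\H{o}s--Gallai cannot work, since every color class may have far fewer than $vn$ edges. The multicolored-cherry mechanism is the missing idea.
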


\begin{proof}

Let $K_n$ be colored with colors from $C$ such that every copy of $P_v$ repeats at most $\ceil{(v-1)/2}-1$ colors. First note that every color class has size less than $vn$, or else we would find a monochromatic $P_v$ (see e.g.~\cite{gallai}). For a color $c$, let $E_c$ denote the set of edges of color $c$.

Suppose that for every $k \geq 16v^2$, there are at most $n^2/8k \log n$ color classes each of size at least $k$. Let $C_i = \{c : i \leq |E_c| \}$, and let $C' = \{c : |E_c| \leq 32 v^2\}$. Then
\[ \binom{n}{2} = \sum_{i=0}^{\floor{\log (vn)}} \sum_{c \in C_{2^{i}}\setminus C_{2^{i+1}}} |E_c| \leq |C'| 32v^2 + \sum_{i=\floor{\log (32v^2)}}^{\floor{\log (vn)}} 2^{i+1} \cdot \frac{n^2}{2^i \cdot 8 \log n} \leq |C'| 32v^2 + \frac{n^2}{4} \]
which implies $|C'| = \Omega(n^2)$, so $|C| = \Omega(n^2)$, as desired.

Now suppose that there is some $k \geq 16v^2$ such that $|C_k| \geq n^2/8k\log n$; fix one such $k$. Let $C^*$ be the set of colors $c \in C_k$ such that there is a monochromatic matching in color $c$ with at least $\frac{k}{16v}$ edges. If $|C^*| \geq \frac{1}{2} |C_k| \geq n^2/16k\log n$, then we are done. Otherwise, we have $|C^*| < \frac{1}{2} |C_k|$, and we will find a copy of $P_v$ with at least $\floor{(v-1)/2}$ repeats with the use of an auxiliary bipartite graph.

Let $c \in C_k \setminus C^*$, and let $S$ be a minimum vertex cover for the edges of color $c$. Since the endpoints of the edges of a maximal matching form a vertex cover and $c$ has a maximal matching of size at most $\frac{k}{16v}$, we have $|S| \leq \frac{k}{8v}$. Since the vertices of $S$ of $c$-degree less than $4v$ cover at most $4v |S| \leq \frac{k}{2}$ edges of $E_c$, then at least half of $E_c$ is incident to vertices of $c$-degree at least $4v$.

Let $G$ be the bipartite graph with parts $X = V(K_n)$ and $Y = \{(u,c): c \in C_k \setminus C^*, u \in V(K_n), d_c(u) \geq 4v \}$ and an edge between $x \in X$ and $(u,c) \in Y$ if the edge $ux$ is colored with $c$. Note that for a given color $c$, the number of edges incident to vertices of the form $(u,c)$ is at least $\frac{1}{2}|E_c| \geq \frac{k}{2}$. This implies
\[ |E(G)| \geq \frac{k}{2}|C_k\setminus C^*| \geq \frac{k}{4}|C_k| \geq \frac{n^2}{32\log n} .\]
On the other hand, it is clear from the definition of $Y$ that $|E(G)| \geq 4v|Y|$.

We delete vertices of degree at most $v$ repeatedly from $X$ and $Y$ until we cannot delete any more, leaving us with a bipartite graph $G'$ with minimum degree greater than $v$. To show that $G'$ is not empty, suppose we delete $D$ edges from $G$; observe that
\[ |E(G')| = |E(G)| - D \geq \max\left(4v|Y|, \frac{n^2}{32\log n}\right) - v (|Y| + |X|) > 0 ,\]
which can be seen by splitting into the cases $|Y| > |X|$ and $n = |X| \geq |Y|$. Thus $G'$ is not empty, and the minimum degree of $G'$ is at least $v+1$.

We greedily construct a path $x_1 (u_1, c_1) \cdots x_{\floor{v/2}} (u_{\floor{v/2}}, c_{\floor{v/2}}) x_{\floor{v/2}+1}$ in $G'$ as follows: select $x_i$ not equal to any of $x_1,\dots,x_{i-1}$ or $u_1,\dots, u_{i-1}$, and select $u_i$ not equal to any of $x_1,\dots,x_i$ or $u_1,\dots,u_{i-1}$. When selecting $x_i$, only $2i-2 < v+1$ vertices of $X$ have been forbidden, so we may choose greedily. When selecting $u_i$, it is possible that we have forbidden many vertices of $Y$, since many vertices of $Y$ correspond to the same $u_j$. However, the vertex $x_j$ is adjacent to $(u_j, c_j)$ only if the edge $x_j u_j$ has color $c_j$, and so $x_j$ has at most one neighbor in $Y$ for every choice of $u_j$. Thus at most $2i-1 < v+1$ of $x_i$'s neighbors in $Y$ are forbidden, so we may choose $u_i$ greedily.

If $v$ is even, this allows us to construct the path $x_1 u_1 \cdots x_{v/2} u_{v/2}$, which has $v/2 - 1 = \floor{(v-1)/2}$ repeats. If $v$ is odd, we construct the path $x_1 u_1 \cdots x_{(v-1)/2} u_{(v-1)/2} x_{(v+1)/2}$, which has $(v-1)/2$ repeats (see Figure \ref{case1}).

\end{proof}

\begin{figure}
\centering
\caption{Path found in Case 1 of the proof of Theorem \ref{pathquad} for $v=7$.}
\label{case1}
\begin{tikzpicture}
\draw (0,0) node{\textbullet} node[below]{$x_1$} -- (1,1) node{\textbullet} node[above]{$u_1$} node[midway,left]{$c_1$} -- (2,0) node{\textbullet} node[below]{$x_2$} node[midway,left]{$c_1$} -- (3,1) node{\textbullet} node[above]{$u_2$} node[midway,left]{$c_2$} -- (4,0) node{\textbullet} node[below]{$x_3$} node[midway,left]{$c_2$} -- (5,1) node{\textbullet} node[above]{$u_3$} node[midway,left]{$c_3$} -- (6,0) node{\textbullet} node[below]{$x_4$} node[midway,left]{$c_3$};
\end{tikzpicture}
\end{figure}
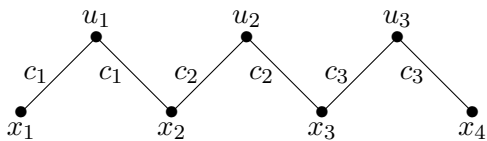

We can think of the argument with the auxiliary bipartite graph as a kind of Tur\'an problem: the auxiliary graph $G$ is reflective of a hypergraph on $V(K_n)$ (the edges are exactly those $c$-neighborhoods of $u$), and we wish to find a kind of `self-avoiding' Berge-path in this hypergraph given that the number of hyperedges present is large.

With Lemma \ref{pathlemma}, we find the quadratic threshold for paths with an even number of vertices in the following theorem.

\begin{theorem}\label{pathquad}
For every $\ceil{v/2}+1 \leq q \leq v-1$, we have $f(K_n, P_v, q) = \Theta\left(n^2\right)$.
\end{theorem}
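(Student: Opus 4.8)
The plan is to establish the two required bounds: the upper bound $f(K_n, P_v, v-1) = O(n^2)$, which is trivial since $K_n$ has only $\binom{n}{2} = O(n^2)$ edges and coloring every edge distinctly makes every copy of $P_v$ rainbow (hence at least $v-1$ colors), and the lower bound $f(K_n, P_v, \ceil{v/2}+1) = \Omega(n^2)$, after which the chain $\Omega(n^2) = f(K_n, P_v, \ceil{v/2}+1) \le f(K_n, P_v, q) \le f(K_n, P_v, v-1) = O(n^2)$ finishes the theorem for all $q$ in the stated range. Phrased via repetitions, the lower bound to prove is $\f(K_n, P_v, \floor{(v-1)/2} - 1) = \Omega(n^2)$: any coloring of $K_n$ in which every copy of $P_v$ repeats at most $\floor{(v-1)/2} - 1$ colors must use $\Omega(n^2)$ colors.

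First I would invoke Lemma \ref{pathlemma} on such a coloring. By hypothesis the first alternative (a copy of $P_v$ with at least $\floor{(v-1)/2}$ repeats) is excluded, so one of the other two alternatives holds. If $\Omega(n^2)$ colors are used, we are immediately done. So the real work is to rule out the third alternative — or rather, to show it also forces a forbidden $P_v$. In that case we have some $k \ge 16v^2$ and at least $n^2/(16k\log n)$ color classes, each containing a monochromatic matching of size at least $k/(16v) \ge v$. The key observation is that a single color class containing a matching of size $\ge v$ already contains a monochromatic $P_v$... no — a matching is not a path. But a matching of size $\ge \floor{v/2}$ inside one color, combined with one or two extra edges (of any colors) joining consecutive matching edges, yields a copy of $P_v$ with $\floor{v/2} - 1 \ge \floor{(v-1)/2}$ repeats (exactly the even/odd alternation used in the greedy construction at the end of the Lemma's proof, but now with \emph{both} half-edges monochromatic in a single color). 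Since $k/(16v) \ge v \ge \floor{v/2}$, each of these many color classes already contains such a matching, so a forbidden $P_v$ exists inside any one of them — contradiction.

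So the argument is essentially: the third alternative of Lemma \ref{pathlemma} is itself impossible under our stronger hypothesis (we forbid only $\floor{(v-1)/2} - 1$ repeats, whereas the Lemma's first alternative produces $\floor{(v-1)/2}$, and a color class with a large monochromatic matching produces at least that many). Thus only the second alternative survives, giving $\Omega(n^2)$ colors. The main thing to get right is the bookkeeping on repeats: I need to check that extending a monochromatic matching of size $m$ to a copy of $P_v$ (either $x_1 u_1 x_2 u_2 \cdots$ for even $v$ with $m = v/2$, or with one extra vertex for odd $v$ with $m = (v-1)/2$) genuinely uses the matching edges as the repeated ones — each of the $m$ matching edges shares its color with... itself within the matching, so the $m$ edges of one color contribute $m - 1$ repeats, and $m - 1 = \floor{v/2} - 1 \ge \floor{(v-1)/2}$ for all $v \ge 3$. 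Since $k/(16v) \ge 16v^2/(16v) = v > \floor{v/2}$, the matching guaranteed in alternative three is more than large enough. I would also double-check the boundary case $v = 3$, where $\floor{(v-1)/2} - 1 = 0$, so the claim is $\f(K_n, P_3, 0) = \Omega(n^2)$, consistent with $P_3 = S_2$ and Proposition \ref{quad}; and confirm that $\ceil{v/2} + 1 \le v - 1$ requires $v \ge 4$, with $v = 3$ handled separately in the main theorem. I expect no serious obstacle here — Lemma \ref{pathlemma} does the heavy lifting — the only care needed is ensuring the repeat counts line up so that alternative three contradicts the hypothesis rather than merely matching it.
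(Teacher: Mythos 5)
Your overall strategy is the paper's: reduce to the single lower bound at $q = \ceil{v/2}+1$, invoke Lemma \ref{pathlemma}, and kill the third alternative by stringing a large monochromatic matching into a copy of $P_v$ with too many repeats. But there is an off-by-one error in your translation into repeats, and it is fatal for odd $v$. The correct reduction is $r = e(P_v) - q = (v-1) - (\ceil{v/2}+1) = \floor{v/2}-2$, i.e.\ you must show $\f(K_n,P_v,\floor{v/2}-2) = \Omega(n^2)$; you instead set out to show $\f(K_n,P_v,\floor{(v-1)/2}-1) = \Omega(n^2)$. These agree for even $v$, but for odd $v$ you have $\floor{(v-1)/2}-1 = (v-3)/2$ while the correct value is $(v-5)/2$. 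The statement you are attempting is therefore \emph{strictly stronger} for odd $v$ --- it is equivalent to $f(K_n,P_v,\frac{v+1}{2}) = \Omega(n^2)$ --- and it is in fact false for odd $v \geq 7$, since Proposition \ref{pathodd} gives $f(K_n,P_v,\frac{v+1}{2}) = O(n^{2-2/(v-1)})$.

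The error surfaces exactly where you flagged it. A monochromatic matching of size $\floor{v/2}$ strung into a $P_v$ has $\floor{v/2}-1$ repeats. You claim $\floor{v/2}-1 \geq \floor{(v-1)/2}$ ``for all $v \geq 3$,'' but for odd $v$ this reads $(v-3)/2 \geq (v-1)/2$, which is false; the construction then produces exactly $\floor{(v-1)/2}-1$ repeats, matching rather than exceeding your hypothesis, so no contradiction is obtained and the third alternative survives. With the correct threshold $\floor{v/2}-2$ everything closes: alternative one gives $\floor{(v-1)/2} \geq \floor{v/2}-1 > \floor{v/2}-2$ repeats and alternative three gives $\floor{v/2}-1 > \floor{v/2}-2$ repeats, both forbidden --- which is precisely the paper's proof. (Your aside that $v=3$ would give $\f(K_n,P_3,0)=\Omega(n^2)$ consistently with Proposition \ref{quad} is also wrong, since $P_3 = S_2$ is a star and $\f(K_n,S_2,0)=\Theta(n)$ by Coloring \ref{fact}; but that case is vacuous here because the range of $q$ is empty for $v=3$.)
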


\begin{proof}

It suffices to show  $\f(K_n, P_v, \ceil{(v-1)/2}-2) = \Omega\left(n^2\right)$. Let $K_n$ be colored with colors from $C$ such that every copy of $P_v$ repeats at most $\ceil{(v-1)/2}-2$ colors. The first case of Lemma \ref{pathlemma} does not occur, and in the second case we are done. If the third case of Lemma \ref{pathlemma} occurs, then we have a monochromatic matching of size $k/16v \geq \floor{v/2}$, with which we have a copy of $P_v$ with $\ceil{(v-1)/2}-1$ repeats (see Figure \ref{matching}). So this case is impossible as well.

\end{proof}

\begin{figure}
\centering
\caption{Edges of a matching strung into a path, as in Case 2 of the proof of Theorem \ref{pathquad}.}
\label{matching}
\begin{tikzpicture}
\draw (0,0) node{\textbullet} -- (0,1) node{\textbullet} node[midway,left]{$c$};
\draw[dashed] (0,1) -- (1,1) node{\textbullet};
\draw (1,1) -- (1,0) node{\textbullet} node[midway,left]{$c$};
\draw[dashed] (1,0) -- (2,0) node{\textbullet};
\draw (2,0) -- (2,1) node{\textbullet} node[midway,left]{$c$};
\draw[dashed] (2,1) -- (3,1) node{\textbullet};
\draw (3,1) -- (3,0) node{\textbullet} node[midway,left]{$c$};
\end{tikzpicture}
\end{figure}
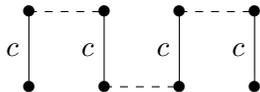

There is a small gap in what we have shown when $v$ is odd, namely $f(K_n, P_v, (v+1)/2)$ has only been shown to be $\Omega(n)$ (and $O(n^2)$ trivially). Note that $v=3$ is equivalent to the star $S_2$, so by Corollary \ref{cor}, $f(K_n, P_3, 2) = \Theta(n)$. The $v=5$ case was shown to be linear by a construction of Rosta \cite{rosta}: label the vertices of $K_n$ with distinct binary strings of length $\ceil{\log n}$, and color the edge between vertices labeled $x$ and $y$ with color $x \oplus y$ (where $\oplus$ is bitwise exclusive-or). This coloring is proper, since $x \oplus y = x \oplus z$ implies $y=z$. In this coloring, if any $P_5$ with vertices $a,b,c,d,e$ (in that order) contains only two colors, it must be that $a \oplus b = c \oplus d$ and $b \oplus c = d \oplus e$. Upon `adding' these equations, we get $a \oplus c = c \oplus e$, meaning $a = e$, contradicting this being a path. Thus Rosta's construction shows $f(K_n, P_5, 3) \leq 2^{\ceil{\log n}} \leq 2n$.

We only have loose bounds when $v \geq 7$. First we need a combinatorial lemma of Erd\H{o}s \cite{erdoslemma}, which we state here for a particular case.

\begin{lemma}\label{erdos1}
Let $A_1, \dots A_N$ be $N$ subsets of $A$, where $|A| = n$ and $|A_i| \geq d\sqrt{n}$ for some constant $d$. If $N \geq \frac{8}{d} \sqrt{n}$, then there are some $i \neq j$ such that $|A_i \cap A_j| \geq \frac{d^2}{2}$.
\end{lemma}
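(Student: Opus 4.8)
The statement is an averaging/counting argument on pairs, so the plan is to double-count incidences between the sets $A_i$ and the elements of $A$, exactly in the spirit of the Cauchy--Schwarz proof of the Kővári–Sós–Turán theorem. First I would form the sum $\Sigma = \sum_{x \in A} \binom{\deg(x)}{2}$, where $\deg(x) = |\{i : x \in A_i\}|$ counts how many of the sets contain $x$; on the one hand $\Sigma = \sum_{i<j} |A_i \cap A_j|$, so if every pairwise intersection were small, say $|A_i \cap A_j| \le \frac{d^2}{2} - 1 < \frac{d^2}{2}$, then $\Sigma < \frac{d^2}{2}\binom{N}{2} \le \frac{d^2}{4} N^2$. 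On the other hand, by convexity of $\binom{t}{2}$, $\Sigma \ge n\binom{\bar t}{2}$ where $\bar t = \frac{1}{n}\sum_x \deg(x) = \frac{1}{n}\sum_i |A_i| \ge d\sqrt{n}$, giving $\Sigma \gtrsim n \cdot \frac{(d\sqrt n)^2}{2} = \frac{d^2 n^2}{2}$ once $\bar t$ is large enough that $\binom{\bar t}{2} \ge \bar t^2/4$. Comparing the two bounds, $\frac{d^2 n^2}{2} \lesssim \frac{d^2 N^2}{4}$ would force $N \gtrsim \sqrt{2}\, n$, and I would want to push the constants so that the hypothesis $N \ge \frac{8}{d}\sqrt n$ is comfortably incompatible with all intersections being $< \frac{d^2}{2}$ — actually I expect the intended reading is the contrapositive: if no two sets meet in $\ge d^2/2$ elements, then $N$ must be small, and the bound one gets is something like $N < \frac{8}{d}\sqrt n$ (note the hypothesis $|A_i| \ge d\sqrt n$, not $\ge d n$, so the relevant regime is $N$ on the order of $\sqrt n$, matching the statement).

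The cleanest way to organize it is: assume for contradiction that $|A_i \cap A_j| \le \lceil d^2/2 \rceil - 1$ for all $i \ne j$ (so in particular $< d^2/2$), and also assume $N \ge \frac{8}{d}\sqrt n$. Write $t_x = \deg(x)$, so $\sum_x t_x = \sum_i |A_i| \ge Nd\sqrt n$. Then
\[
\sum_{i \ne j} |A_i \cap A_j| = \sum_{x} t_x(t_x - 1) \ge \frac{1}{n}\Bigl(\sum_x t_x\Bigr)^2 - \sum_x t_x \ge \frac{(Nd\sqrt n)^2}{n} - \sum_x t_x = N^2 d^2 - \sum_x t_x,
\]
using Cauchy--Schwarz in the form $\sum t_x^2 \ge (\sum t_x)^2/n$. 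Meanwhile $\sum_x t_x \le N n$ trivially (each $A_i \subseteq A$), but a sharper bound is $\sum_x t_x = \sum_i |A_i| \le Nn$; and the left side is $< \frac{d^2}{2} N(N-1) < \frac{d^2}{2}N^2$. So $\frac{d^2}{2}N^2 > N^2 d^2 - Nn$, i.e. $Nn > \frac{d^2}{2}N^2$, i.e. $n > \frac{d^2}{2}N$, i.e. $N < \frac{2n}{d^2}$. This is not yet the claimed $\frac{8}{d}\sqrt n$, so the main obstacle — and the step I'd need to get right — is feeding in the correct lower bound for $\sum t_x^2$ or the correct normalization: one should not throw away the hypothesis $|A_i| \ge d\sqrt n$ so crudely. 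The right move is to keep the term $N^2 d^2$ and balance it against $\frac{d^2}{2}N^2 + Nn$, concluding $\frac{d^2}{2}N^2 < Nn$ and hence $N < \frac{2n}{d^2}$; then to recover a $\sqrt n$-type bound one uses that we only care about the regime where this is consistent, or more likely the original lemma has $|A_i|\ge d\sqrt n$ precisely so that the threshold comes out as $N \ge \frac{8}{d}\sqrt n \Rightarrow N^2 \ge \frac{64 n}{d^2}$, which beats $\frac{2n}{d^2}$ by a factor of $32$, leaving ample room for the wasteful constants above.

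So the key steps, in order, are: (1) introduce the degree sequence $t_x$ and record $\sum_x t_x = \sum_i |A_i| \ge N d \sqrt n$; (2) double-count: $\sum_{i<j}|A_i \cap A_j| = \sum_x \binom{t_x}{2}$; (3) apply Cauchy--Schwarz to bound $\sum_x \binom{t_x}{2}$ from below by something of order $\frac{(\sum t_x)^2}{n} \ge N^2 d^2$ (after absorbing lower-order terms, using $N \ge \frac{8}{d}\sqrt n$ to guarantee $\sum t_x = o$ of the main term, or just to make the subtracted $\binom{N}{2}$-type loss negligible); (4) if every $|A_i \cap A_j| < \frac{d^2}{2}$, bound the same sum from above by $\frac{d^2}{2}\binom{N}{2} < \frac{d^2}{4}N^2$; (5) compare — the lower bound $\approx d^2 N^2$ exceeds the upper bound $\approx \frac{d^2}{4} N^2$, a contradiction. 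The only real care needed is bookkeeping the constants and the lower-order terms so the contradiction is genuine; I expect the hypothesis $N \ge \frac{8}{d}\sqrt n$ is deliberately generous so that no delicacy is required. I would cite \cite{erdoslemma} for the general version and just verify this specialization.
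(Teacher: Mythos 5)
First, note that the paper does not prove this lemma at all: it is quoted from Erd\H{o}s \cite{erdoslemma} and used as a black box, so there is no in-paper argument to compare against. Your double-counting plan (degrees $t_x$, the identity $\sum_{i<j}|A_i\cap A_j|=\sum_x\binom{t_x}{2}$, convexity, and comparison with the trivial upper bound) is the standard proof of Erd\H{o}s's lemma, and with the right bookkeeping it does work with these constants.

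However, as written there is a genuine gap, and it sits exactly at the step you flag but never resolve. Your only concrete computation bounds the subtracted linear term by $\sum_x t_x \le Nn$, which yields $\frac{d^2}{2}N^2 > N^2d^2 - Nn$ and hence only $N < 2n/d^2$ --- this is \emph{not} in conflict with $N \ge \frac{8}{d}\sqrt{n}$ once $n > 16d^2$, so the argument as executed proves nothing. The hypothesis $N \ge \frac{8}{d}\sqrt{n}$ must be used to absorb the linear term, and you never actually invoke it. The fix is one line: with $T = \sum_x t_x = \sum_i |A_i| \ge Nd\sqrt{n} \ge \frac{8}{d}\sqrt{n}\cdot d\sqrt{n} = 8n$, Cauchy--Schwarz gives
\[
\sum_{i\ne j}|A_i\cap A_j| \;=\; \sum_x t_x(t_x-1) \;\ge\; \frac{T^2}{n} - T \;=\; \frac{T^2}{n}\left(1 - \frac{n}{T}\right) \;\ge\; \frac{7}{8}\cdot\frac{T^2}{n} \;\ge\; \frac{7}{8}\,N^2d^2,
\]
whereas if every pairwise intersection had size less than $\frac{d^2}{2}$ the same sum would be less than $\frac{d^2}{2}N(N-1) < \frac{1}{2}N^2d^2$; since $\frac{7}{8} > \frac{1}{2}$ this is the desired contradiction. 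So the missing idea is simply that $|A_i|\ge d\sqrt{n}$ and $N\ge\frac{8}{d}\sqrt{n}$ together force $T \ge 8n$, i.e.\ $T \le \frac{1}{8}\cdot\frac{T^2}{n}$, which is what makes the $-T$ loss harmless; the crude bound $T\le Nn$ must not be used. With that inserted, your steps (1)--(5) close correctly.
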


\begin{prop}\label{pathodd}
For odd $v\geq 7$, we have $f(K_n, P_v, \frac{v+1}{2}) = O\left(n^{2-2/(v-1)}\right)$. For odd $v \geq 9$, we have $f(K_n, P_v, \frac{v+1}{2}) = \Omega\left(n^{3/2}/\log n\right)$, and for $v=7$, we have $f(K_n, P_7, 4) = \Omega\left(n^{4/3}/\log^{2/3} n\right)$.
\end{prop}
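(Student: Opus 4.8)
The plan is to handle the three claims of Proposition~\ref{pathodd} by three separate arguments: the upper bound via Theorem~\ref{LLL}, and the two lower bounds via Lemma~\ref{pathlemma} together with the combinatorial Lemma~\ref{erdos1}. For the upper bound, note that $P_v$ has $v$ vertices and $e = v-1$ edges, and we want every copy to contain $q = (v+1)/2$ colors, i.e.\ at most $r = e - q = (v-1) - (v+1)/2 = (v-3)/2$ repeats. Theorem~\ref{LLL} gives $\f(K_n, P_v, r) = O\!\left(n^{(v-2)/(r+1)}\right)$, and $(v-2)/((v-3)/2 + 1) = (v-2)/((v-1)/2) = 2(v-2)/(v-1) = 2 - 2/(v-1)$, so $f(K_n, P_v, (v+1)/2) = O\!\left(n^{2 - 2/(v-1)}\right)$, as claimed.

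For the lower bounds I would feed into Lemma~\ref{pathlemma}. Suppose $K_n$ is colored so that every $P_v$ has at most $(v+1)/2 - 1 - 1 = (v-3)/2$ repeats; since $\floor{(v-1)/2} = (v-1)/2 > (v-3)/2$ for odd $v$, the first outcome of Lemma~\ref{pathlemma} cannot occur, so either $\Omega(n^2)$ colors are used (and we are done, since $n^2$ dominates both claimed bounds), or we land in the third outcome: there are at least $N := n^2/(16k\log n)$ color classes, each containing a matching $M_c$ of size at least $k/(16v)$, for some $k \geq 16v^2$. The idea is then a \emph{self-avoidance} argument: if two of these matchings share many vertices, we can interleave enough of their edges into a single $P_v$ and overcount repeats. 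Concretely, for each such color $c$ let $A_c$ be the vertex set of $M_c$, so $|A_c| \geq k/(8v)$; these are $N$ subsets of the $n$-element vertex set. If some $A_c, A_{c'}$ intersect in at least $t$ vertices, then (being unions of matchings) they contain a long alternating-ish subpath: one should be able to build a $P_v$ using roughly $(v+1)/2$ of these matching edges, giving about $(v+1)/2 - 1 = (v-1)/2$ repeats, again contradicting the repeat bound. So the matchings' vertex sets must be nearly disjoint in the sense of Lemma~\ref{erdos1}, and that lemma then forces $N$ to be small, contradicting the lower bound $N \geq n^2/(16k\log n)$ unless $k$ (hence $1/N$) is already large enough to give the claimed polynomial bound on the number of colors.

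The bookkeeping is where the two cases $v = 7$ and $v \geq 9$ diverge, and this is the step I expect to be the main obstacle. We want to apply Lemma~\ref{erdos1} with $A = V(K_n)$, $|A| = n$, sets $A_c$ of size $\geq k/(8v) = d\sqrt n$, so $d \asymp k/(v\sqrt n)$; the lemma says that if $N \geq (8/d)\sqrt n \asymp v n / k$ then two of the $A_c$ meet in $\geq d^2/2 \asymp k^2/(v^2 n)$ vertices. We need that intersection size to be large enough to embed the required portion of a $P_v$ — for $v \geq 9$ we need about $(v-1)/2 \geq 4$ shared matching edges, i.e.\ intersection $\gtrsim v$ (a constant), which forces $k^2/(v^2 n) \gtrsim 1$, i.e.\ $k \gtrsim v\sqrt n$; plugging $k \asymp v\sqrt n$ back into $N \geq n^2/(16k\log n)$ gives $\Omega(n^{3/2}/\log n)$ colors. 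For $v = 7$ the embedding is more delicate — one only needs intersection a small constant, but the path has only $4$ "free" edges to place, so the balance between $d$, $k$, and $N$ tips differently; optimizing $k$ there yields $k \asymp n^{2/3}(\log n)^{1/3}$ and hence $\Omega(n^{4/3}/\log^{2/3} n)$. I would also need to verify carefully that two matchings sharing $t$ vertices really do yield a $P_v$ with the claimed number of repeats — this requires checking that one can string together edges from $M_c$ and $M_{c'}$ through their common vertices into a single path of exactly $v$ vertices without premature vertex collisions, which is the genuinely combinatorial heart of the argument and the place most likely to need a short case analysis on the parity and on how the shared vertices sit inside the two matchings.
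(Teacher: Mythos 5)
Your upper bound is exactly the paper's (Theorem \ref{LLL} with $r=(v-3)/2$), and your plan for $v\geq 9$ is also the paper's: Lemma \ref{pathlemma}, then Lemma \ref{erdos1} applied to the vertex sets of the large monochromatic matchings, with the case split at $k\asymp v^{3/2}\sqrt{n}$ and two heavily-intersecting matchings yielding disjoint two-colored ``cherries'' (an edge of color $c_i$ meeting an edge of color $c_j$) that are strung into a $P_v$. One arithmetic slip there: a $P_v$ containing $(v+1)/2$ matching edges split between two colors has only $(v+1)/2-2=(v-3)/2$ repeats, one short of the required $(v-1)/2$; the actual construction packs $\ceil{v/3}$ cherries, i.e.\ roughly $2v/3$ matching edges, into the path, giving $\floor{2v/3}-2\geq (v-1)/2$ repeats --- and this inequality holds precisely when $v\geq 9$.

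That last point is where your plan for $v=7$ genuinely fails: $\floor{14/3}-2=2<3=(v-1)/2$, so cherries strung into a $P_7$ never reach the needed $3$ repeats, and you cannot upgrade the structure, since the union of two matchings restricted to their common vertices may consist entirely of paths of length $2$, so no longer alternating path is guaranteed by a large intersection alone. This is exactly why $v=7$ gets a weaker bound via a different mechanism in the paper: take a single color $c$ with a matching $M$ of $k/16v$ edges and let $S=V(M)$; inside $S$, any additional $c$-edge, or any $5$ edges of a single other color $c'$ (which must meet at least $3$ edges of $M$), forces a $P_7$ with $3$ repeats. Hence every color appears at most $4$ times on edges within $S$, giving $\Omega(|S|^2)=\Omega(k^2/v^2)$ colors; balancing this against the $n^2/(16k\log n)$ colors from the third outcome of Lemma \ref{pathlemma} is what produces the threshold $k\asymp n^{2/3}/\log^{1/3}n$ and the bound $\Omega(n^{4/3}/\log^{2/3}n)$. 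Your guessed threshold $k\asymp n^{2/3}$ is numerically right, but it cannot come out of the Erd\H{o}s-lemma/two-matching mechanism you describe, whose natural threshold sits at $k\asymp\sqrt{n}$; you need this separate single-matching counting argument for $v=7$.
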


\begin{proof}

The upper bound comes from Theorem \ref{LLL}.

Let $K_n$ be colored with colors from $C$ so that every $P_7$ contains at least $4$ colors (and at most $2$ repeated colors). By Lemma \ref{pathlemma}, we either have a copy of $P_7$ with at least $3$ repeats (which does not occur), we use $\Omega(n^2)$ colors (so we are done), or there exists a $k \geq 16v^2$ such that there are at least $n^2/16k\log n$ color classes each with a matching of at least $k/16v$ edges. Let $C^*$ be the set of these colors. If $k \leq n^{2/3}/\log^{1/3} n$, then there are already at least $\Omega(n^{4/3}/\log^{2/3} n)$ many colors. Otherwise, $k \geq n^{2/3}/\log^{1/3} n$. For $c \in C^*$, there is a matching $M$ of size $ k/16v \geq n^{2/3}/(16v\log^{1/3} n)$ in color $c$; let $S$ be the matched vertices of $M$. If an edge of color $c$ appears between vertices of $S$ but is not in the matching $M$, then we have a $P_7$ with $3$ repeats, which is not allowed. If there are five edges of color $c'$ among the vertices of $S$, these edges must be incident to at least three edges of $M$, and so we may find a $P_7$ with $3$ repeats as well (Figure \ref{P7} enumerates these possibilities and shows the desired $P_7$ in each case). Therefore there are at least $\Omega(n^{4/3}/\log^{2/3} n)$ colors (even just among the vertices of $S$).

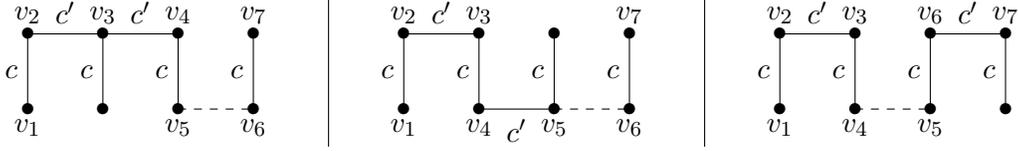
\begin{figure}
\centering
\caption{Cases with a large matching when $v=7$ in Proposition \ref{pathodd}}
\label{P7}
\begin{tikzpicture}
\draw (0,0) node{\textbullet} -- (0,1) node{\textbullet} node[midway,left]{$c$};
\draw (1,0) node{\textbullet} -- (1,1) node{\textbullet} node[midway,left]{$c$};
\draw (2,0) node{\textbullet} -- (2,1) node{\textbullet} node[midway,left]{$c$};
\draw (3,0) node{\textbullet} -- (3,1) node{\textbullet} node[midway,left]{$c$};
\draw (0,1) -- (1,1) node[midway,above]{$c'$};
\draw (1,1) -- (2,1) node[midway,above]{$c'$};
\draw (0,0) node[below]{$v_1$};
\draw (0,1) node[above]{$v_2$};
\draw (1,1) node[above]{$v_3$};
\draw (2,1) node[above]{$v_4$};
\draw[dashed] (2,0) node[below]{$v_5$} -- (3,0) node[below]{$v_6$};
\draw (3,1) node[above]{$v_7$};

\draw (4,-.5) -- (4,1.5);
\begin{scope}[shift={(5,0)}]
\draw (0,0) node{\textbullet} -- (0,1) node{\textbullet} node[midway,left]{$c$};
\draw (1,0) node{\textbullet} -- (1,1) node{\textbullet} node[midway,left]{$c$};
\draw (2,0) node{\textbullet} -- (2,1) node{\textbullet} node[midway,left]{$c$};
\draw (3,0) node{\textbullet} -- (3,1) node{\textbullet} node[midway,left]{$c$};
\draw (0,1) -- (1,1) node[midway,above]{$c'$};
\draw (1,0) -- (2,0) node[midway,below]{$c'$};
\draw (0,0) node[below]{$v_1$};
\draw (0,1) node[above]{$v_2$};
\draw (1,1) node[above]{$v_3$};
\draw (1,0) node[below]{$v_4$};
\draw[dashed] (2,0) node[below]{$v_5$} -- (3,0) node[below]{$v_6$};
\draw (3,1) node[above]{$v_7$};
\end{scope}

\draw (9,-.5) -- (9,1.5);
\begin{scope}[shift={(10,0)}]
\draw (0,0) node{\textbullet} -- (0,1) node{\textbullet} node[midway,left]{$c$};
\draw (1,0) node{\textbullet} -- (1,1) node{\textbullet} node[midway,left]{$c$};
\draw (2,0) node{\textbullet} -- (2,1) node{\textbullet} node[midway,left]{$c$};
\draw (3,0) node{\textbullet} -- (3,1) node{\textbullet} node[midway,left]{$c$};
\draw (0,1) -- (1,1) node[midway,above]{$c'$};
\draw (2,1) -- (3,1) node[midway,above]{$c'$};
\draw (0,0) node[below]{$v_1$};
\draw (0,1) node[above]{$v_2$};
\draw (1,1) node[above]{$v_3$};
\draw (2,1) node[above]{$v_6$};
\draw[dashed] (1,0) node[below]{$v_4$} -- (2,0) node[below]{$v_5$};
\draw (3,1) node[above]{$v_7$};
\end{scope}
\end{tikzpicture}
\end{figure}

To show the lower bound when $v \geq 9$, again apply Lemma \ref{pathlemma}: the first case does not occur, and in the second case we are done, so suppose we are in the third case as before. If $k\leq 32 v^{3/2} n^{1/2}$ we have $\Omega(n^{3/2}/\log n)$ many colors in $C^*$ and we are done. So suppose $k \geq 32 v^{3/2} n^{1/2}$. Let $C^* = \{c_1, \dots, c_{N}\}$, let $M_i$ be a maximum matching in color $c_i$, and let $d=4\sqrt{v}$. Note that $|V(M_i)| \geq 2 \frac{k}{16v} \geq d \sqrt{n}$ and $N = |C^*| \geq \frac{n^2}{16k \log n} \geq \frac{8}{d} \sqrt{n}$, the latter of which follows because $k \leq vn$, since no color class has size more than $vn$. We have by Lemma \ref{erdos1} that there are two indices $i$ and $j$ such that $|V(M_i) \cap V(M_j)| \geq \frac{d^2}{2} = 8v$. The graph whose edges are the edges of $M_i$ and $M_j$ incident to at least one vertex of $V(M_i) \cap V(M_j)$ contains a disjoint union of even cycles and paths of length at least two on at least $8v > \frac{5}{3} v$ vertices. Thus we can find $\ceil{\frac{v}{3}}$ disjoint copies of an edge of color $c_i$ incident with an edge of color $c_j$, as in Figure \ref{cherry}. Stringing these together into a path with $v$ vertices gives at least $\floor{\frac{2}{3}v}-2 \geq \frac{v-1}{2}$ many repeated colors (with equality when $v = 9$, 11, and 13), which is not allowed.

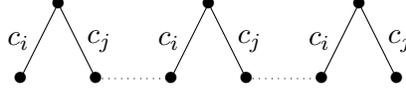
\begin{figure}
\centering
\caption{Multicolored stars that are strung into a path.}
\label{cherry}
\begin{tikzpicture}
\draw (0,0) node{\textbullet} -- (.5,1) node{\textbullet} node[midway,left]{$c_i$};
\draw (.5,1) -- (1,0) node{\textbullet} node[midway,right]{$c_j$};
\draw[dotted] (1,0) -- (2,0) node{\textbullet};
\draw (2,0) -- (2.5,1) node{\textbullet} node[midway,left]{$c_i$};
\draw (2.5,1) -- (3,0) node{\textbullet} node[midway,right]{$c_j$};
\draw[dotted] (3,0) -- (4,0) node{\textbullet};
\draw (4,0) -- (4.5,1) node{\textbullet} node[midway,left]{$c_i$};
\draw (4.5,1) -- (5,0) node{\textbullet} node[midway,right]{$c_j$};
\end{tikzpicture}
\end{figure}

\end{proof}

\section{Acknowledgements}

This research project was conducted at 2018 CUNY Combinatorics REU. Many thanks go to Adam Sheffer, who ran the program, fostered discussions, and had several helpful comments on this paper. Additional thanks go to a referee for suggestions which improved the clarity of the proofs.

\bibliographystyle{abbrv}
\bibliography{references}

\end{document}